\def \Q{{\mathbb Q}}
\def \P{{\mathbb P}}
\def \Z{{\mathbb Z}}
\def \C{{\mathbb C}}
\def \R{{\mathbb R}}
\newtheorem{theorem}{Theorem}%[section]
\newtheorem{remark}[theorem]{Remark}
\newtheorem{definition}[theorem]{Definition}
\newtheorem{lemma}[theorem]{Lemma}
\newtheorem{proposition}[theorem]{Proposition}
\newtheorem{conjecture}[theorem]{Conjecture}
\newtheorem{corollary}[theorem]{Corollary}
\newtheorem{example}[theorem]{Example}
\newtheorem{question}[theorem]{Question}
\DeclareMathOperator{\Gr}{Gr}
\begin{document}
\title[]{The nilpotent quotients of normal quasi-projective varieties with proper quasi-Albanese map}

\dedicatory{\`A la mémoire de Jean-Pierre Demailly,\\
 pour ses contributions mathématiques fondamentales,\\
 et son inlassable générosité.}

\author{Rodolfo Aguilar Aguilar}
\address{\parbox{\linewidth}{University of Miami, Coral Gables, USA and Institute for Mathematics and Informatics, Bulgarian Academy of Sciences, Sofia, Bulgaria.}}
\thanks{The first author is partially supported by the Bulgarian Ministry of Education and Science, Scientific Programme "Enhancing the Research Capacity in Mathematical Sciences (PIKOM)", No. DO1-67/05.05.2022.}

\email{\href{mailto:aaguilar.rodolfo@gmail.com}{aaguilar.rodolfo@gmail.com}}
\urladdr{\url{https://sites.google.com/view/rodolfo-aguilar/}}

\author{Fr\'ed\'eric Campana}
\address{Universit\'e Lorraine \\
 Institut Elie Cartan\\
Nancy, France}

\email{frederic.campana@univ-lorraine.fr}

\date{\today}

\begin{abstract} We show that if $X$ is a normal complex quasi-projective variety, the quasi-Albanese map of which is proper, then the torsionfree nilpotent quotients of $\pi_1(X)$ are, up to a controlled finite index, the same ones as those of the normalisation of its quasi-Albanese image. When $X$ is quasi-K\"ahler smooth, we get the same conclusion, but only for the smooth models  of the quasi-Albanese image. In this second case, the proof is elementary, as the one given in \cite{Ca} for $X$ compact.

In the normal quasi-projective case, the \'etale Galois cover of $X$ associated to the nilpotent completion of $\pi_1(X)$ is thus holomorphically convex. This is proved in the smooth case by $3$ other methods in \cite{GGK}, which motivated the present text.

When  $X$ is `special' in the sense of \cite{Ca11}, we deduce that the torsion free nilpotent quotients of $\pi_1(X)$ are abelian. Examples show that this property fails (as first observed in \cite{CDY}) when the quasi-Albanese map is not proper. This leads to replace our previous `Abelianity conjecture' in the compact case by an `Nilpotency conjecture' in the non-compact quasi-K\"ahler context. 

\end{abstract}

\maketitle

\tableofcontents

\section{Introduction}

A quasi-K\"ahler manifold $X$ is a Zariski open subset of a compact K\"ahler manifold $\overline{X}$. A `good compactification' $(\overline{X},D)$ of $X$ is a smooth compact K\"ahler manifold $\overline{X}$ together with a simple normal crossings divisor $D$ of $\overline{X}$ such that $\overline{X}\setminus D=X$.

We denote with $\alpha_X:X\to Alb(X)$ the quasi-Albanese map of $X$, introduced by S. Iitaka (see \cite{I}), and with $Z_0''\subset Alb(X)$ its image. We denote with $Z'_0\to Z_0''$ the normalisation of $Z_0''$, and still write $\alpha_X:X\to Z'_0$ for the lift of $\alpha_X$ to $Z'_0$.

\medskip

{\bf Main assumption:} As in \cite{GGK}, we always assume here that $\alpha_X:X\to Alb(X)$ is {\bf proper}, so that $Z_0''\subset Alb(X)$ is closed and quasi-K\"ahler.

\medskip

By modifying $X$, which does not change $\pi_1(X)$, we get the following commutative diagram, where $\sigma':=d\circ \alpha$ is the Stein factorisation of $\alpha_X=\varphi'\circ \sigma'$, so that $Z'$ is normal, the fibres of $\sigma':X\to Z'$ are connected, and those of $\varphi':Z'\to Z'_0$ are finite. Moreover, $d:Z\to Z'$ and $d_0:Z_0\to Z'_0$ are desingularisations. The maps $\varphi$ and $\varphi'$ are thus finite, of the same degree $e$.

$$ \begin{matrix}
\xymatrix{X\ar[r]^{\alpha}\ar[rd]_{\alpha_0}&Z\ar[r]^{d}\ar[d]^{\varphi}& Z'\ar[d]^{\varphi'}\\
&Z_0\ar[r]^{d_0}&Z'_0\\
}
\end{matrix}$$

We may assume that all these maps extend holomorphically to suitable compatible compactifications $\overline{Z},\overline{Z'},\overline{Z_0},\overline{Z'_0}$ of $Z,Z',Z_0, Z'_0$, which are good for $Z,Z_0$.  

\medskip

We use the notations, and refer to the appendix of \cite{Ca} (see also \cite{S},\S7). In particular if $G$ is a group, and $G_n$ is the (nilpotent) quotient of $G$ by the $(n+1)-th$ term of its central series, and $G'_n:=G_n/Torsion$, with Kernel $K'_n(G)$, then we define $G^{nilp}:=G/\cap K'_n(G)$. These constructions are functorial.

We have the following elementary property (part of \cite{S}, Theorem 7.3): if $f:G\to H$ is a morphism of groups such that , for any $n\geq 1$, the induced morphism  $(K'_n(G)/K'_{n+1}(G))\otimes \Bbb Q\to (K'_n(H)/K'_{n+1}(H))\otimes \Bbb Q$ is isomorphic, then $G^{nilp}\to H^{nilp}$ is injective. From this, it is easy to deduce that if, moreover, $f:G\to H$ has an image of finite index $e$, then $G^{nilp}\to H^{nilp}$ has moreover an image of finite index at most $e$. In particular, if $f$ is surjective, then $G^{nilp}\to H^{nilp}$ is isomorphic.

\medskip

By $\pi^X_{nilp}:X^{nilp}\to X$, we denote the Galois \'etale cover of group $\pi_1(X)^{nilp}$. We use a similar notation for $Z,Z', Z_0,Z'_0$. If $f:V\to W$ is a holomorphic map, we denote by $f_{nilp}:V^{nilp}\to W^{nilp}$ the induced map.

\begin{remark}\label{finite index} The property of $G^{nilp}$ stated above, shows that if $f:V\to W$ is a surjective proper map with finite fibres, of degree $e$, between connected normal complex spaces, and if $(K'_n(G)/K'_{n+1}(G))\otimes \Bbb Q\to (K'_n(H)/K'_{n+1}(H))\otimes \Bbb Q$ is isomorphic for all $n>0$, with $G:=\pi_1(V), H:=\pi_1(W)$, then the induced map $G^{nilp}\to H^{nilp}$ is injective with image of finite index at most $e$. This follows from the fact that $f_*:G\to H$ has image of finite index at most $e$  (see \cite{Ca91}, Proposition 1.3). We shall repeatedly apply this remark to $\varphi:Z\to Z_0$ and $\varphi':Z'\to Z'_0$, and thus show the results stated for either  $Z$, or $Z'$ only, since the proofs apply to both cases.
\end{remark}

Our first result is:

\begin{theorem}\label{nilp smooth} If $X$ is quasi-K\"ahler  and $\alpha_X$ proper, then:
$\alpha_*:\pi_1(X)^{nilp}\to \pi_1(Z)^{nilp}$ is isomorphic, 
$(\alpha_0)_*: \pi_1(X)^{nilp}\to \pi_1(Z_0)^{nilp}$ is injective with image of finite index at most $e$.

Hence: $X^{nilp}=X\times_ZZ^{nilp}$, and $Z^{nilp}$ is a component of $Z\times_{Z_0}Z_0^{nilp}$.
\end{theorem}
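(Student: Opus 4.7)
The plan is to verify, for the map $\alpha: X \to Z$, the hypothesis of the Stallings-type criterion recalled just before Remark~\ref{finite index}: that $\alpha$ induces isomorphisms on the rational graded pieces $(K'_n/K'_{n+1}) \otimes \Q$ of the Malcev tower. By the classical Sullivan/Stallings theorem, this reduces to the two cohomological statements (a) $\alpha^*: H^1(Z,\Q) \to H^1(X,\Q)$ is an isomorphism, and (b) $\alpha^*: H^2(Z,\Q) \to H^2(X,\Q)$ is injective. Granted these, $\alpha_*: \pi_1(X)^{nilp} \to \pi_1(Z)^{nilp}$ is an isomorphism. The second assertion, on $(\alpha_0)_*$, then follows by factoring $\alpha_0 = \varphi \circ \alpha$: the same arguments applied to $\alpha_0$ verify the Remark~\ref{finite index} hypothesis for the finite map $\varphi$ of degree $e$, giving $\varphi_*$ injective with image of finite index at most $e$, and composing with the iso just obtained yields the claim.

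For (a), the quasi-Albanese satisfies $\alpha_X^*: H^1(Alb(X),\Q) \xrightarrow{\sim} H^1(X,\Q)$ by its defining property. The factorization $\alpha_X = \varphi' \circ d \circ \alpha$ then forces $\alpha^*$ to be surjective on $H^1$. For injectivity, $\sigma' = d \circ \alpha$ has connected fibers (Stein factorization), so $\sigma'_*$ is surjective on $\pi_1$, whence $(\sigma')^*$ is injective on $H^1(-,\Q)$. Combined with the Koll\'ar--Takayama theorem ($d_*: \pi_1(Z) \xrightarrow{\sim} \pi_1(Z')$ for the desingularization of the normal variety $Z'$), which gives $d^*$ iso on $H^1(-,\Q)$, this forces $\alpha^*$ injective on $H^1$, hence an isomorphism.

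The main obstacle is (b), where the quasi-K\"ahler hypothesis enters non-trivially. A first observation, underlying the argument, is that any fiber $F$ of $\alpha$ maps to a single point in $Alb(X)$, so the restriction $H^1(X,\Q) \to H^1(F,\Q)$ vanishes (it factors through $\alpha_X^*$, which is iso). One then argues that the Leray spectral sequence for $\alpha$ has the differential $d_2: H^0(Z, R^1\alpha_* \Q) \to H^2(Z,\Q)$ vanishing, forcing the edge map $\alpha^*: H^2(Z,\Q) \hookrightarrow H^2(X,\Q)$. This can be established via the decomposition theorem for proper morphisms between smooth (quasi-)K\"ahler manifolds, which yields a direct-summand injection $H^*(Z,\Q) \hookrightarrow H^*(X,\Q)$, or more elementarily via the direct K\"ahler-theoretic argument of \cite{Ca} for the compact case, which uses Hodge-theoretic properties of the cup product (together with the $H^1$-iso already established) to rule out new classes in the kernel; this last argument extends to the quasi-K\"ahler setting with only notational changes, working with good compactifications and log forms.

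With (a), (b), and the Stallings criterion, we conclude that $\alpha_*: \pi_1(X)^{nilp} \xrightarrow{\sim} \pi_1(Z)^{nilp}$. The analogous statements (a$'$)--(b$'$) for $\alpha_0: X \to Z_0$, proved identically, together with the iso on $\pi_1^{nilp}$ already obtained for $\alpha$, verify the Remark~\ref{finite index} hypothesis for $\varphi: Z \to Z_0$; therefore $\varphi_*$ is injective with image of finite index $\leq e$, and so is $(\alpha_0)_* = \varphi_* \circ \alpha_*$. The fibre-product identifications $X^{nilp} = X \times_Z Z^{nilp}$ and $Z^{nilp}$ being a component of $Z \times_{Z_0} Z_0^{nilp}$ then follow from the Galois correspondence for \'etale covers applied to the isomorphism (resp.\ finite-index injection) on $\pi_1^{nilp}$.
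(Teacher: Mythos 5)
Your overall route is the paper's: reduce, via the Stallings/Sullivan criterion, to showing that $\alpha^*$ is an isomorphism on $H^1(\cdot,\Q)$ and injective on $H^2(\cdot,\Q)$, then handle $\varphi:Z\to Z_0$ with Remark \ref{finite index} and conclude the covering statements by Galois correspondence. That skeleton is correct. However, two of your supporting sub-arguments do not work as stated. First, the Leray argument for (b) is backwards: the vanishing of $H^1(X,\Q)\to H^1(F,\Q)$ on fibres forces $E_\infty^{0,1}=0$, i.e.\ it shows that $d_2:H^0(Z,R^1\alpha_*\Q)\to H^2(Z,\Q)$ is \emph{injective}; but $\ker(\alpha^*)$ on $H^2$ is the \emph{image} of that same $d_2$, so your observation by itself gives nothing unless you separately prove $H^0(Z,R^1\alpha_*\Q)=0$ or invoke $E_2$-degeneration (the decomposition theorem for proper Kähler morphisms of non-compact manifolds), a much heavier input than the theorem needs. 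Second, the appeal to a ``Kollár--Takayama'' isomorphism $\pi_1(Z)\cong\pi_1(Z')$ for a resolution of a normal variety is false in general --- the paper's own cone example has $\pi_1(Z')=1$ but $\pi_1(Z)\cong\pi_1(E)$, so $d^*$ need not be surjective on $H^1$. Fortunately neither is needed: injectivity of $\alpha^*$ on $H^1$ (and on all $H^k$) follows directly from properness and surjectivity of $\alpha$.

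That last point is exactly the paper's Lemma \ref{inj}: for any proper surjective map between quasi-Kähler manifolds, $f^*$ is injective on $H^k(\cdot,\Q)$ for all $k$, proved by extending the cup-product argument of [Ca, Lemme 2.4]. Your fallback option for (b) is this argument, but ``only notational changes'' undersells the one genuine issue in the non-compact setting: one must replace Poincaré duality by the pairing $H^k(Z)\times H^{2m-k}_c(Z)\to H^{2m}_c(Z)$ and check it is perfect, which the paper justifies by the finite-dimensionality of the compactly supported cohomology of a quasi-Kähler manifold (via a good compactification and the long exact sequence of the pair $(\overline{Z},D)$). With that supplied, and the Leray/resolution detours deleted, your proof coincides with the paper's.
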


\begin{corollary}\label{cor nilp smooth} In the situation of theorem \ref{nilp smooth}, we have:

1. $X^{nilp}$ is holomorphically convex if $Z_0^{nilp}$ is Stein.

2. If $\alpha_X^*:H^2(Alb(X),\Bbb Q)\to H^2(Z_0,\mathbb Q)$ is injective, $\pi_1(X)^{nilp}=H_1(X,\mathbb Z)/Torsion$.
This occurs, in particular, if $\alpha_X$ is surjective.
\end{corollary}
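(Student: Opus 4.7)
The two assertions have quite different flavors. Part (1) is almost formal given Theorem \ref{nilp smooth}. Assuming $Z_0^{nilp}$ is Stein: the finite morphism $\varphi:Z\to Z_0$ pulls back to a finite morphism $Z\times_{Z_0}Z_0^{nilp}\to Z_0^{nilp}$, and since a finite holomorphic map to a Stein space has Stein total space, $Z\times_{Z_0}Z_0^{nilp}$ is Stein; $Z^{nilp}$ is a connected component of this fibre product by the theorem, hence itself Stein. The theorem also gives $X^{nilp}=X\times_Z Z^{nilp}$, and because $\alpha:X\to Z$ is proper (it lifts the proper Stein factorisation $\sigma'$ through the birational desingularisation $d$, after the modification of $X$), the pulled-back map $X^{nilp}\to Z^{nilp}$ is proper. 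Any proper holomorphic map to a Stein space has holomorphically convex source, giving the claim.

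For part (2), the plan is to apply the rational Stallings theorem (Stallings--Dwyer) to $(\alpha_X)_*:\pi_1(X)\to \pi_1(Alb(X))$: a group homomorphism inducing an isomorphism on $H_1(-,\mathbb{Q})$ and a surjection on $H_2(-,\mathbb{Q})$ induces an isomorphism of rational Malcev completions. The $H_1$ condition is automatic from the universal property of the quasi-Albanese. For the $H_2$ condition, via Hopf's surjection $H_2(X,\mathbb{Q})\twoheadrightarrow H_2(\pi_1(X),\mathbb{Q})$ and the asphericity of $Alb(X)$, it suffices to show $\alpha_X^*:H^2(Alb(X),\mathbb{Q})\to H^2(X,\mathbb{Q})$ is injective. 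This pullback factors as
$$H^2(Alb(X),\mathbb{Q})\to H^2(Z_0,\mathbb{Q})\to H^2(Z,\mathbb{Q})\to H^2(X,\mathbb{Q}).$$
The first arrow is injective by hypothesis; the second because $\varphi$ is finite of degree $e$, so $\varphi_*\varphi^*=e\cdot\mathrm{id}$; the third because $\alpha:X\to Z$ is proper with connected fibres, so $R^0\alpha_*\mathbb{Q}=\mathbb{Q}_Z$ and the edge terms $E_2^{p,0}$ of the Leray spectral sequence survive to $E_\infty$ (differentials $d_r:E_r^{p,0}\to E_r^{p+r,1-r}$ vanish for $r\geq 2$). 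Stallings--Dwyer then forces each $\pi_1(X)_n\otimes\mathbb{Q}$ to be abelian, isomorphic to $H_1(X,\mathbb{Q})$.

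The integral conclusion is immediate: rational abelianness of $\pi_1(X)_n$ says $[\pi_1(X)_n,\pi_1(X)_n]$ is torsion, hence $\pi_1(X)_n'$ is torsion-free abelian, and the canonical surjection $\pi_1(X)_n'\twoheadrightarrow \pi_1(X)_1'=H_1(X,\mathbb{Z})/Torsion$ between finitely generated torsion-free abelian groups is a rational isomorphism, hence an honest isomorphism. Passing to the inverse limit in $n$ gives $\pi_1(X)^{nilp}=H_1(X,\mathbb{Z})/Torsion$. The "in particular" clause follows because $\alpha_X$ surjective forces $Z_0''=Alb(X)$, which is smooth, so equals its normalisation $Z_0'$; one may then take $Z_0=Alb(X)$, and $\alpha_X^*:H^2(Alb(X))\to H^2(Z_0)$ becomes the identity. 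The most delicate point I anticipate is simply bookkeeping the factorisation of $\alpha_X^*$ and correctly handling the passage between space (co)homology and group (co)homology so that the hypotheses of Stallings--Dwyer are applied at the level of $\pi_1$.
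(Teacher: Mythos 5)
Your part (1) is correct and is essentially the intended (omitted) argument: a finite map over a Stein space has Stein source, a connected component of a Stein space is Stein, and a proper map onto a Stein space has holomorphically convex source. Your part (2) also follows the paper's route in outline: reduce to the injectivity of $\alpha_X^*:H^2(Alb(X),\Q)\to H^2(X,\Q)$ and apply Stallings' rational criterion (the paper recalls exactly this criterion in Remark \ref{r1}, and the reduction to injectivity on $H^2$ is stated at the start of the proof of Theorem \ref{nilp smooth}).

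However, your justification of the third arrow, $\alpha^*:H^2(Z,\Q)\to H^2(X,\Q)$, has a genuine gap. In the Leray spectral sequence for $\alpha$ you only checked that the differentials \emph{leaving} $E_r^{p,0}$ vanish; but the edge homomorphism $H^2(Z,\Q)=E_2^{2,0}\to H^2(X,\Q)$ factors through the quotient of $E_2^{2,0}$ by the images of the \emph{incoming} differentials, here $d_2:E_2^{0,1}=H^0(Z,R^1\alpha_*\Q)\to E_2^{2,0}$, which need not vanish for a proper surjective map with connected fibres. The paper's Remark \ref{r1}.1 records the counterexample: an elliptic Hopf surface fibred over $\P^1$ satisfies every hypothesis you invoke ($R^0\alpha_*\Q=\Q_Z$, properness, connected fibres), yet $H^2(\P^1,\Q)\to H^2(X,\Q)=0$ is not injective. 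The injectivity you need is true, but only because $X$ and $Z$ are quasi-K\"ahler; it is precisely Lemma \ref{inj}, whose proof goes through Poincar\'e duality with compact supports and a K\"ahler class rather than through Leray. Replacing your spectral-sequence paragraph by a citation of Lemma \ref{inj} repairs the argument; the remaining steps (the transfer identity $\varphi_*\varphi^*=e\cdot\mathrm{id}$ for the finite map $\varphi$, Hopf's surjection, asphericity of $Alb(X)$, and the passage from rational to integral abelianness of the torsionfree nilpotent quotients) are all sound.
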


\begin{theorem}\label{nilp} If $X$ quasi-projective,and if $\alpha_X$ is proper:
$(d\circ \alpha)_*:\pi_1(X)^{nilp}\to \pi_1(Z')^{nilp}$ is an isomorphism, and $\varphi_*:\pi_1(Z')^{nilp}\to \pi_1(Z_0')^{nilp}$ is injective with image of finite index at most $e$.
\end{theorem}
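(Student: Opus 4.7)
The plan is to verify for $\sigma':=d\circ\alpha$ and $\varphi'$ the two criteria from the introduction together with Remark \ref{finite index}: surjectivity (resp.\ image of finite index $\le e$) on $\pi_1$, together with an isomorphism of the rational graded pieces of the lower central series. Surjectivity of $\sigma'_*:\pi_1(X)\to\pi_1(Z')$ is immediate, since by construction $\sigma'$ is the Stein factorisation of the proper map $\alpha_X$, so it has connected fibres and its target is normal. Similarly, $\varphi':Z'\to Z'_0$ is finite of degree $e$, so $\varphi'_*$ has image of finite index at most $e$ by \cite{Ca91}, Proposition~1.3.

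The substantial content is the isomorphism on rational LCS quotients, which I would deduce from Theorem \ref{nilp smooth} by reduction to smooth models. I would pick a resolution of singularities $\mu:\tilde X\to X$, smooth quasi-projective, and a compatible resolution $d:Z\to Z'$, so that $\tilde\alpha:\tilde X\to Z$ fits in a commutative square lying over $\sigma':X\to Z'$. Theorem \ref{nilp smooth} applied to $\tilde X$ gives $\tilde\alpha_*:\pi_1(\tilde X)^{nilp}\xrightarrow{\sim}\pi_1(Z)^{nilp}$, and hence an isomorphism of rational LCS quotients between $\pi_1(\tilde X)$ and $\pi_1(Z)$. It then remains to show that the \emph{vertical} maps $\mu_*:\pi_1(\tilde X)^{nilp}\to\pi_1(X)^{nilp}$ and $d_*:\pi_1(Z)^{nilp}\to\pi_1(Z')^{nilp}$ are isomorphisms; granting this, the conclusion for $\sigma'_*$ is immediate by chasing the square. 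Running the parallel argument on the companion square with $Z_0,Z'_0$ in place of $Z,Z'$, and invoking the statement of Theorem \ref{nilp smooth} about $\alpha_0$ and $\varphi$, supplies the rational LCS isomorphism needed to apply Remark \ref{finite index} to $\varphi'$, yielding part two.

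The main obstacle is therefore the invariance of the torsion-free nilpotent completion of $\pi_1$ under the resolutions $\mu, d, d_0$ of our specific normal quasi-projective varieties. This cannot hold for arbitrary normal varieties---as one sees on the affine cone over a curve of positive genus, whose resolution carries the surface-group $\pi_1$ while the cone is simply connected---so quasi-projectivity must enter essentially here. I would expect the argument to proceed either via mixed Hodge theory on $\pi_1$ of a complex algebraic variety (Morgan, Hain), which controls the rational LCS quotients through resolution-invariant data, or through the specific geometry of the factorisation $X\to Z'\to Z'_0\hookrightarrow Alb(X)$: since $\alpha_X^*$ induces an isomorphism on $H^1$ through this chain, each intermediate pull-back on $H^1$ is already an isomorphism, and the finite maps $\varphi,\varphi'$ together with the bound by the degree $e$ constrain the higher LCS pieces tightly enough to descend the smooth-case conclusion. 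This is precisely the step for which the elementary proof of Theorem \ref{nilp smooth} is insufficient and where the quasi-projective hypothesis is indispensable.
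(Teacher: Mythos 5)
Your reduction is sound as far as it goes, and it correctly isolates the crux: since $X$ is smooth here, Theorem \ref{nilp smooth} already gives $\alpha_*:\pi_1(X)^{nilp}\xrightarrow{\sim}\pi_1(Z)^{nilp}$, so the whole content of Theorem \ref{nilp} is that the resolution $d:Z\to Z'$ (and likewise $d_0:Z_0\to Z'_0$) induces an isomorphism on $\pi_1^{nilp}$. But that is exactly the step you leave unproved, and neither of your two sketched routes closes it. The first (``resolution-invariant data'' from mixed Hodge theory on $\pi_1$) is contradicted by your own cone example and by Remark \ref{r1}.2: the rational LCS quotients, and the relevant cohomological kernels, are \emph{not} invariant under resolution of normal varieties in general. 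The second (the $H^1$-isomorphism along $X\to Z'\to Z'_0\hookrightarrow Alb(X)$ plus the degree bound $e$) only controls the abelianization; Example \ref{ex} shows that surjectivity on $\pi_1$ together with an isomorphism on $H^1$ does not determine the higher LCS quotients --- there the conclusion genuinely fails because $\alpha_X$ is not proper --- so any argument in which properness does not enter the comparison of higher LCS pieces cannot be correct. Your proposal uses properness only to set up the Stein factorisation, not in the LCS comparison, which is the tell-tale sign of the gap.

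What the paper actually does at this point is quantitative and cohomological. By \cite{ADH}, Lemma 4.1 (dualized) and its functoriality, the Mal\v cev Lie algebra of $\pi_1$ of a (possibly singular) quasi-projective variety $Y$ with $H^1(Y)=H^1(A)$, $A=Alb(X)$, is determined by $\mathrm{Ker}\bigl(H^2(A)\to H^2(Y)\bigr)$; so it suffices to prove $\mathrm{Ker}\bigl(H^2(A)\to H^2(Z)\bigr)=\mathrm{Ker}\bigl(H^2(A)\to H^2(Z')\bigr)$. By strictness of MHS morphisms this is checked on weight-graded pieces: weights $0,1$ come from \cite{ADH}; weights $3,4$ are Proposition \ref{injw>2}, proved by induction on dimension via the Mayer--Vietoris sequences of discriminant squares (no properness needed); and weight $2$ is where properness of $\alpha_X$ enters, through compatible compactifications, the Gysin description of the kernels $K,K',L$ as spans of boundary Chern classes, the identity $D_Z=(a'd')^{-1}(D_A)$ (which \emph{uses} properness), and Deligne's \cite{D}, Proposition 8.2.7. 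To repair your write-up you would need to supply this entire weight-by-weight analysis, or an equivalent substitute; as it stands the proposal defers the full content of the theorem to an assertion that is false for general normal varieties and is only made true here by the specific Hodge-theoretic argument above.
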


The Claim 2 of the following Corollary is proved by 3 other methods in \cite{GGK}. It was previously proved for $Z'$ smooth in \cite{Ca}, remarque 1.9, and in \cite{K} for $X$ projective. 

\begin{corollary}\label{cor nilp} (\cite{GGK}) In the situation of theorem \ref{nilp}, we have:

1.  The connected components of the fibres of $\pi_1(X)^{nilp}\to \pi_1(Z')^{nilp}$ are compact.

2. $Z'^{nilp}$ is Stein, and $X^{nilp}$ is thus holomorphically convex.

\end{corollary}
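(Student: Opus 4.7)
The plan is to derive both claims of Corollary~\ref{cor nilp} from Theorem~\ref{nilp}, combined with the Stein-ness of the nilpotent cover of a closed normal subvariety of a semi-abelian variety.

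For Claim~1, I will use that Theorem~\ref{nilp} gives $(d\circ\alpha)_*:\pi_1(X)^{nilp}\to\pi_1(Z')^{nilp}$ an isomorphism; by functoriality of the étale covers $\pi^V_{nilp}$, this translates into the cartesian identification $X^{nilp}=X\times_{Z'}Z'^{nilp}$. The projection $X^{nilp}\to Z'^{nilp}$ is then the pullback of $\sigma':=d\circ\alpha:X\to Z'$, which is the Stein factorisation of the proper map $\alpha_X$, hence proper with connected fibres. The fibres of $X^{nilp}\to Z'^{nilp}$ are thus compact and connected.

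For Claim~2, I will proceed in two steps. First, the injectivity of $\varphi_*:\pi_1(Z')^{nilp}\to\pi_1(Z'_0)^{nilp}$ from Theorem~\ref{nilp} forces
\[
\ker\bigl(\pi_1(Z')\to\pi_1(Z')^{nilp}\bigr)=\ker\bigl(\pi_1(Z')\to\pi_1(Z'_0)^{nilp}\bigr),
\]
so $Z'^{nilp}$ is a connected component of the pullback cover $Z'\times_{Z'_0}(Z'_0)^{nilp}$. Since $\varphi':Z'\to Z'_0$ is finite, this pullback is finite over $(Z'_0)^{nilp}$; if $(Z'_0)^{nilp}$ is Stein, so is the pullback, and hence so is its connected component $Z'^{nilp}$. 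Second, to show $(Z'_0)^{nilp}$ Stein, I will use that $Z'_0$ is a normal finite cover of the closed subvariety $Z''_0\subset Alb(X)$: the universal cover $\widetilde{Alb(X)}\cong\C^r$ is Stein (Iitaka), and the pullback $Z'_0\times_{Alb(X)}\C^r$ is finite over $\C^r$, hence Stein. This already yields Stein-ness of the maximal torsion-free abelian cover of $Z'_0$; the full Stein-ness of $(Z'_0)^{nilp}$ is the smooth-case result proved by three methods in \cite{GGK}, transferred to the normal case via the desingularisation $d_0:Z_0\to Z'_0$ and a comparison of nilpotent quotients in the spirit of Remark~\ref{finite index}. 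Once $Z'^{nilp}$ is Stein, the holomorphic convexity of $X^{nilp}$ follows: by Claim~1 the map $X^{nilp}\to Z'^{nilp}$ is proper with compact fibres onto a Stein base, so $X^{nilp}$ is holomorphically convex with $Z'^{nilp}$ as its Remmert reduction.

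The main obstacle will be the Stein-ness of $(Z'_0)^{nilp}$: the elementary Albanese-cover argument delivers only the Stein-ness of the maximal torsion-free abelian cover of $Z'_0$, while $(Z'_0)^{nilp}$ is strictly larger whenever $\pi_1(Z'_0)^{nilp}$ fails to be abelian — which, as emphasised in the introduction, is the generic situation in the non-compact quasi-K\"ahler setting. It is precisely at this step that one must invoke either the smooth-case methods of \cite{GGK} (combined with Remark~\ref{finite index} applied to $d_0:Z_0\to Z'_0$), or a Hodge/Nori-type argument specific to subvarieties of semi-abelian varieties.
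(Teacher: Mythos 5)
Your treatment of Claim 1 is the intended one and is correct: the isomorphism $\pi_1(X)^{nilp}\cong\pi_1(Z')^{nilp}$ of Theorem \ref{nilp} gives $X^{nilp}=X\times_{Z'}Z'^{nilp}$, so the fibres of $X^{nilp}\to Z'^{nilp}$ are those of the Stein factorisation $\sigma'$ of the proper map $\alpha_X$, hence compact and connected. Likewise your last step --- holomorphic convexity of $X^{nilp}$ from Claim 1 together with Steinness of $Z'^{nilp}$, via the Remmert reduction --- is exactly what the paper intends.

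The gap is in your proof of Steinness of $Z'^{nilp}$, and you flag it yourself. Your reduction to Steinness of $(Z'_0)^{nilp}$ via finiteness of $\varphi'$ is sound, but you then only establish Steinness of the maximal torsion-free \emph{abelian} cover and defer the genuinely nilpotent part to the smooth-case methods of \cite{GGK}; as written this is not a proof. The paper's route is shorter and bypasses $(Z'_0)^{nilp}$ entirely: since $\alpha_X$ is proper, $Z'$ maps finitely onto the closed subvariety $Z_0''\subset A$, so $Z'\times_A\tilde{A}$ is finite over a closed analytic subset of the universal cover $\tilde{A}\cong\C^n$ and is therefore Stein; and since $\pi_1(Z')\to\pi_1(A)$ factors through $\pi_1(Z')^{nilp}$ (the target being free abelian), $Z'^{nilp}$ is an \'etale cover of $Z'\times_A\tilde{A}$. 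The paper then declares the Steinness of $Z'^{nilp}$ ``clear'' from this. Note that this final step is precisely the difficulty you isolated --- an infinite \'etale cover with nilpotent deck group of a Stein space is not automatically Stein --- and the paper is in effect attributing it to \cite{GGK} (the corollary is stated as theirs, proved there by three methods). So you have located the crux correctly, but to complete the argument you should replace the detour through $(Z'_0)^{nilp}$ by the direct comparison with $Z'\times_A\tilde{A}$, and then either justify the Steinness of that nilpotent \'etale cover or cite \cite{GGK} for it explicitly.
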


The Steinness of $Z'^{nilp}$ is clear, since it is an \'etale cover of $Z'\times_A\tilde{A}$, $\tilde{A}$ being the (Stein) universal cover of $A$.

The proof of Theorem \ref{nilp} shows that its conclusion holds, more generally, for $X$ normal, instead of smooth. See Corollary \ref{cor alb} for the definition and properties of the quasi-Albanese map in the normal case.

\begin{theorem}\label{qp normal}
If $X$ is quasi-projective, normal and if $\alpha_X$ is proper, the induced map $\pi_1(X)^{nilp}\to \pi_1(Z')^{nilp}$ is an isomorphism, and $\pi_1(X)^{nilp}\to \pi_1(Z_0')^{nilp}$ is injective with image of finite index at most $e$.
\end{theorem}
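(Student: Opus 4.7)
The plan is to reduce the statement to Theorem~\ref{nilp} via a resolution of singularities. Let $\pi\colon\tilde X\to X$ be a resolution, which exists by Hironaka since $X$ is quasi-projective. Since $X$ is normal, a classical theorem of Koll\'ar on fundamental groups of normal varieties asserts that $\pi_*\colon\pi_1(\tilde X)\to\pi_1(X)$ is surjective; the elementary property of nilpotent completion recalled in the excerpt (the surjective case) then yields an isomorphism $\pi_1(\tilde X)^{nilp}\xrightarrow{\sim}\pi_1(X)^{nilp}$.

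To apply Theorem~\ref{nilp} to $\tilde X$, we verify that $\alpha_{\tilde X}\colon\tilde X\to Alb(\tilde X)$ is proper. By the universal property of the quasi-Albanese, the composite $\alpha_X\circ\pi\colon\tilde X\to Alb(X)$ factorises as $\beta\circ\alpha_{\tilde X}$, with $\beta\colon Alb(\tilde X)\to Alb(X)$ a morphism of quasi-abelian varieties. Since $\alpha_X\circ\pi$ is proper (composition of proper maps) and $\beta$ sends compacts to compacts, the set $\alpha_{\tilde X}^{-1}(K)$ is closed in the compact $(\alpha_X\circ\pi)^{-1}(\beta(K))$ for any compact $K\subset Alb(\tilde X)$, hence is itself compact. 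Theorem~\ref{nilp} then applies to $\tilde X$, yielding an isomorphism $\pi_1(\tilde X)^{nilp}\xrightarrow{\sim}\pi_1(\tilde Z')^{nilp}$, where $\tilde Z'$ is the intermediate space of the Stein factorisation of $\tilde X\to\tilde Z'_0\subset Alb(\tilde X)$.

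The main obstacle is identifying $\tilde Z'$ with $Z'$ at the level of nilpotent completions. To set this up: since $\pi$ and $\sigma'\colon X\to Z'$ both have connected fibres, the Stein factorisation of $\tilde X\to Z'_0$ is $\tilde X\to Z'\to Z'_0$. The morphism $\beta$ induces a morphism $\tilde Z'_0\to Z'_0$, and Stein-factorising the composite $\tilde Z'\to\tilde Z'_0\to Z'_0$ yields, by uniqueness of Stein factorisations, a canonical proper surjection with connected fibres $\nu\colon\tilde Z'\to Z'$ between two normal varieties. The delicate step is to show that $\nu_*^{nilp}$ is an isomorphism; this is formally the same statement as the first assertion of Theorem~\ref{nilp}, transposed to the morphism $\nu$ between the two normal varieties $\tilde Z'$ and $Z'$, both fibring properly over the common quasi-abelian base $Z'_0\subset Alb(X)$, and the proof of Theorem~\ref{nilp} should apply \emph{mutatis mutandis}. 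Granting this, composing the isomorphisms $\pi_1(X)^{nilp}\cong\pi_1(\tilde X)^{nilp}\cong\pi_1(\tilde Z')^{nilp}\cong\pi_1(Z')^{nilp}$ gives the first assertion of the theorem; the second then follows from Remark~\ref{finite index} applied to the finite morphism $\varphi'\colon Z'\to Z'_0$ of degree $e$.
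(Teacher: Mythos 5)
Your very first step contains a genuine gap that invalidates the whole chain. Surjectivity of $\pi_*\colon\pi_1(\tilde X)\to\pi_1(X)$ only gives surjectivity of $\pi_1(\tilde X)^{nilp}\to\pi_1(X)^{nilp}$, not injectivity. The ``in particular, if $f$ is surjective then $G^{nilp}\to H^{nilp}$ is isomorphic'' clause you invoke is stated in the paper under the standing hypothesis that the maps $(K'_n(G)/K'_{n+1}(G))\otimes\Q\to(K'_n(H)/K'_{n+1}(H))\otimes\Q$ are already known to be isomorphisms --- which is essentially the content one is trying to prove, not a consequence of surjectivity alone. The paper itself points out the counterexample right after this proof: for $X$ the projective cone over an abelian variety $E$, one has $\pi_1(X)=1$ while $\pi_1(\tilde X)\cong\pi_1(E)=\Z^{2g}$, so $\pi_1(\tilde X)^{nilp}=\Z^{2g}\neq 1=\pi_1(X)^{nilp}$, even though $\alpha_X$ (to a point) is proper. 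The same example breaks your third step: there $Alb(\tilde X)=E$, $\tilde Z'=E$, $Z'=\mathrm{pt}$, and $\nu\colon\tilde Z'\to Z'$ certainly does not induce an isomorphism on nilpotent completions; nor can the proof of Theorem~\ref{nilp} be ``transposed'' to $\nu$, because that proof compares kernels of $H^2(A)\to H^2(\cdot)$ for $A$ the Albanese of the \emph{source}, and $Alb(\tilde Z')$ need not agree with $Alb(X)$. (Your verification that $\alpha_{\tilde X}$ is proper is fine, but it feeds into a strategy that cannot work.)

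The paper's actual route avoids passing through $\pi_1(\tilde X)^{nilp}$ entirely. It first defines $A:=Alb(X)$ for $X$ normal (Appendix, via the sub-MHS $g^*(H^1(X))\subset H^1(\tilde X)$), so that $H^1(X)=H^1(A)$, and then shows directly that $\ker\bigl(H^2(A)\to H^2(X)\bigr)=\ker\bigl(H^2(A)\to H^2(Z')\bigr)$, which by the dualized \cite{ADH} criterion gives $\pi_1(X)^{nilp}\cong\pi_1(Z')^{nilp}$. The resolution $\tilde X$ enters only as an auxiliary smooth space: Lemma~\ref{Gr2} (applied to $\tilde X\to Z'\to A$, using properness of the composite) handles the weight-$2$ graded piece, and Proposition~\ref{injw>2} gives injectivity of $\Gr^W_iH^2(X)\to\Gr^W_iH^2(\tilde X)$ and $\Gr^W_iH^2(Z')\to\Gr^W_iH^2(\tilde Z')$ for $i=3,4$, allowing the kernels computed on $\tilde X$ to be transferred to $X$ and $Z'$. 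If you want to salvage your reduction, you would have to prove the equality of those $H^2$-kernels anyway, at which point the detour through $\pi_1(\tilde X)^{nilp}$ buys nothing.
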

Here, $Z_0'$ is, as before, the normalisation of $\alpha_X(X)$, and $Z'$ is the Stein factorisation of $X\to Z_0'$.  The Corollary \ref{cor nilp} thus remains valid for $X$ normal.

\medskip

Although the preceding theorems are essentially particular cases of Theorem \ref{qp normal}, it is instructive to give their proofs, which relies on simpler notions and techniques.

While the proof of Theorem \ref{nilp smooth} is elementary, directly extending to the open case the arguments of \cite{Ca}, the proof of Theorem \ref{nilp} relies on the mixed Hodge structures constructed in \cite{D} and \cite{H} on the cohomology and homotopy groups of complex algebraic varieties, which permit to deal with the, possibly arbitrary, singularities occuring in $Z_0''$. The proof of Theorem \ref{qp normal} is essentially the same one as of Theorem \ref{nilp}, once the Albanese map is defined in the normal context. The arguments should work in the quasi-K\"ahler case as well, but the corresponding statements have not been written in this context, as far as we know.

Corollary \ref{cor nilp smooth} applies to `special' quasi-K\"ahler manifolds as defined in \cite{Ca11} when the quasi-Albanese map is proper, and shows then that $\pi_1(X)^{nilp}=H_1(X,\mathbb Z)/Torsion$. We show however by examples in the last section that this fails in general if $\alpha_X$ is not proper, as first observed in \cite{CDY}. As a consequence, the `Abelianity conjecture' in the compact case needs to be weakened to the `Nilpotency conjecture' when $X$ is not compact.

Examples given in Remark \ref{counter} show that the holomorphic convexity of the universal cover of the smooth quasi-projective varieties commonly fails the open case.

\subsection{Acknowledgements}
We thank Robert Laterveer for the example of Hilbert surfaces given in Remark \ref{r1}, Morihiko Saito for the examples in Remark \ref{rMS}, the authors of \cite{CDY} for sending us their preprint,  and R. Hain for communicating his construction of the higher-Albanese in the normal quasi-projective case (we actually use only the case $s=1$, also constructed in \cite{CDY}).

\section{Proof of Theorem \ref{nilp smooth}}

When $X$ is compact K\"ahler, this is proved in \cite{Ca}, Corollaire 3.1. From the proof given there, it is sufficient to show that $\alpha^*:H^2(Z,\mathbb Q)\to H^2(X,\mathbb Q)$ is injective, which follows from the more general next Lemma:

\begin{lemma}\label{inj} Let $f:X\to Z$ be a proper surjective map between quasi-K\"ahler (connected) manifolds. 

For every $k\geq 0$, the map $f^*:H^k(Z,\mathbb Q)\to H^k(X,\mathbb Q)$ is injective.
\end{lemma}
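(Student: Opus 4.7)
The plan is to imitate the classical compact-K\"ahler argument, using a K\"ahler class restricted from a compactification together with the Gysin (integration-along-fibers) map for the proper morphism $f$.

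Fix a compact K\"ahler compactification $\overline X \supset X$ (available by definition of quasi-K\"ahler) and a K\"ahler class $\omega \in H^2(\overline X, \mathbb{R})$; let $\tilde\omega := \omega|_X \in H^2(X, \mathbb{R})$. Set $n = \dim_{\mathbb{C}} X$ and $m = \dim_{\mathbb{C}} Z$, so that $n \geq m$. Since $f$ is proper between smooth (hence canonically oriented) manifolds, there is a Gysin map
\[
f_! \colon H^j(X, \mathbb{R}) \longrightarrow H^{j - 2(n-m)}(Z, \mathbb{R})
\]
satisfying the projection formula $f_!(f^*\alpha \cup \beta) = \alpha \cup f_!(\beta)$.

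The key step is to show that $c := f_!(\tilde\omega^{n-m}) \in H^0(Z, \mathbb{R}) = \mathbb{R}$ (recall $Z$ is connected) is strictly positive. On the dense open subset of $Z$ over which $f$ is a submersion, this class is represented by the fiber integral $\int_{F_z} \omega_0^{n-m}|_{F_z}$, for any K\"ahler form $\omega_0$ representing $\omega$ and any generic $z$. By properness, $F_z$ is a compact complex submanifold of $X \subset \overline X$ of complex dimension $n-m$; the restriction $\omega_0|_{F_z}$ is then K\"ahler on $F_z$, so the integral is strictly positive, and hence $c > 0$. The conclusion drops out: for $\alpha \in H^k(Z, \mathbb{R})$ with $f^*\alpha = 0$, the projection formula gives
\[
c \cdot \alpha \; = \; \alpha \cup f_!(\tilde\omega^{n-m}) \; = \; f_!\bigl(f^*\alpha \cup \tilde\omega^{n-m}\bigr) \; = \; 0,
\]
so $\alpha = 0$; injectivity over $\mathbb{R}$ implies injectivity over $\mathbb{Q}$.

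The main potential obstacle is verifying that the Gysin map together with the projection formula is genuinely available in this setting --- that is, for a proper morphism between non-compact smooth complex manifolds. This is standard in algebraic topology: $f_!$ is the Umkehr map built from Poincar\'e duality for orientable manifolds and pushforward of Borel--Moore (locally finite) chains, the latter being well-defined thanks to the properness of $f$. Once this is granted, the K\"ahler positivity of $c$ is immediate, and the rest of the argument is formal.
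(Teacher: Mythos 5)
Your proof is correct and is essentially the paper's argument (which itself follows Lemme~2.4 of \cite{Ca}): both rest on the properness of $f$ to push forward along the fibres, and on the strict positivity of $\int_{F_z}\omega^{\,n-m}$ over the compact generic fibres, which are complex submanifolds on which the restricted K\"ahler form stays K\"ahler. The only difference is packaging: you route non-compact Poincar\'e duality through the Borel--Moore Gysin map and the projection formula, whereas the paper pairs a nonzero $u\in H^k(Z,\mathbb Q)$ against a compactly supported class $v\in H^{2m-k}_c(Z,\mathbb Q)$ and must therefore check (via a good compactification) that $H^{*}_c(Z)$ is finite-dimensional so that the pairing is perfect --- a verification your formulation conveniently bypasses.
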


\begin{proof} This is the same one as for the Lemme\footnote{The hypothesis that $X$ is compact has been forgotten in its statement.} 2.4 of \cite{Ca}, up to the non-compact version of Poincar\'e duality in cohomology. With the notations of the proof of this Lemma 2.4, it is sufficient to show that if $u\in H^k(Z,\Bbb Q)\neq 0$, there is $v\in H_c^{2m-k}(Z,\mathbb Q)$ such that $[u\wedge v]\neq 0\in H_c^{2m}(Z,\mathbb Q), m:=dim_{\mathbb C}(Z)$. By \cite{BT},Chap 1,\S 5, (especially remark 5.7), this is true if $Z$ has either a good finite cover, or more generally, if its cohomologies with compact support are finite dimensional. But this latter property is satisfied by quasi-K\"ahler manifolds, by the exact cohomology sequence with compact supports for a pair $(A,B)$ with $A\supset B$ compact, applied to a good compactification of $Z$: $A=\overline{Z}, B=D$, $Z=\overline{Z}\setminus D$.
\end{proof}

The consequences stated in Corollary \ref{cor nilp smooth} are then obvious.

We give in Corollary \ref{Albdom} situations in which $\alpha_X$ is dominant, hence surjective if proper.

\begin{remark}\label{r1} 1.The K\"ahler hypothesis of Lemma \ref{inj} is required already when $X$ is compact as the examples of Hopf surfaces of algebraic dimension $1$ and of the Iwazawa threefold show, see 2.5 and 2.6 of \cite{Ca}. The example \ref{ex} shows that the conclusion of lemma \ref{inj} may fail if the properness hypothesis is dropped, even if $Z$ is an elliptic curve and $a_{X}:X\to Z$ a $\mathbb C^*$-bundle.

2. The conclusion of Lemma \ref{inj} may also fail if $Z$, projective, is only assumed to be normal. There indeed exist compactified normal Hilbert modular surfaces $Z$ with elliptic singularities cusps such that the kernel of $H^2(Z)\to H^2(X)$ is non-zero, if $X$ is their minimal resolution (see \cite{BPV},Chap. V, \S22 ). We thank Robert Laterveer for indicating us this class of examples. These examples show that Stallings criterion (surjectivity of $H_2(G)\to H_2(Q)$ implies $G^{nilp}=Q^{nilp}$ for two groups with the same $H_1$ with $\mathbb Q$ coefficients if the group morphism $G\to H$ is onto) is a sufficient, but not necessary condition. See Theorem \ref{normal compact} below, which shows that the weaker condition that $H_2(G)$ and $H_2(H)$  have the same image in $H_2(G^{ab}/Torsion)$ is sufficient, in the quasi K\"ahlerian context, to get the same conclusion, where $G^{ab}$ denotes the abelianisation of $G$.
\end{remark}

\section{Proof of Theorem \ref{nilp}}

We start with proving the easier compact case.

\begin{theorem}\label{normal compact} Let $X$ be a projective manifold, $\alpha_X:X\to Alb(X)$ its Albanese map, and $\sigma:X\to Z'$ its Stein factorisation (so that $Z'$ is normal). Then $\pi_1(X)^{nilp}=\pi_1(Z')^{nilp}$.\end{theorem}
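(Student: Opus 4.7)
My plan is to reduce the claim to a comparison of the cup products $\Lambda^{2}H^{1}\to H^{2}$ on $X$ and $Z'$, exploiting that for projective varieties the rational nilpotent completion of $\pi_{1}$ is, up to formality, determined by $H^{1}$ together with the kernel of this cup product. Equivalently I rely on the refined Stallings-type criterion of Remark \ref{r1}: for a surjection $G\twoheadrightarrow H$ with $H_{1}(\cdot,\mathbb{Q})$ iso, equality of the images of $H_{2}(G,\mathbb{Q})$ and $H_{2}(H,\mathbb{Q})$ in $H_{2}(G^{ab}/\mathrm{tors},\mathbb{Q})$ forces $G^{nilp}\cong H^{nilp}$. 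Since $\sigma:X\to Z'$ is a Stein factorisation, its fibres are connected and $\sigma_{*}:\pi_{1}(X)\to\pi_{1}(Z')$ is surjective. It then suffices to show (a) $\sigma^{*}:H^{1}(Z',\mathbb{Q})\to H^{1}(X,\mathbb{Q})$ is an isomorphism, and (b) $\ker\cup_{X}=\ker\cup_{Z'}$ in $\Lambda^{2}H^{1}(X,\mathbb{Q})=\Lambda^{2}H^{1}(Z',\mathbb{Q})$ under the identification of (a).

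Step (a) is formal: $\sigma^{*}$ is injective as the dual of the surjection on $H_{1}$, and surjective because $\alpha_{X}$ factors through $\sigma$ via the finite map $Z'\to Alb(X)$ while $\alpha_{X}^{*}$ is an isomorphism on $H^{1}$. For step (b) I would factor $\sigma=d\circ\alpha$ through a resolution $d:Z\to Z'$, with $\alpha:X\to Z$ a proper surjective map between smooth projective manifolds. Lemma \ref{inj} applied to $\alpha$ gives injectivity of $\alpha^{*}:H^{2}(Z,\mathbb{Q})\to H^{2}(X,\mathbb{Q})$, whence $\ker\cup_{X}=\ker\cup_{Z}$. To compare $\cup_{Z}$ with $\cup_{Z'}$ I would invoke Deligne's mixed Hodge theory: normality of $Z'$ gives $\pi_{1}(Z)\cong\pi_{1}(Z')$, so $H^{1}(Z',\mathbb{Q})\cong H^{1}(Z,\mathbb{Q})$ is pure of weight $1$, which makes $\mathrm{Im}(\cup_{Z'})$ pure of weight $2$ in $H^{2}(Z',\mathbb{Q})$; meanwhile, for $Z'$ complete and normal, $\mathrm{Gr}^{W}_{2}H^{2}(Z',\mathbb{Q})$ embeds via $d^{*}$ into $H^{2}(Z,\mathbb{Q})$, equivalently $\ker d^{*}\subseteq W_{1}H^{2}(Z',\mathbb{Q})$. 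These two observations force $d^{*}|_{\mathrm{Im}(\cup_{Z'})}$ to be injective, yielding $\ker\cup_{Z'}=\ker\cup_{Z}=\ker\cup_{X}$.

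The main obstacle I anticipate is precisely this mixed Hodge theoretic input accommodating the singularities of $Z'$: when $Z'$ is smooth, the argument collapses to two direct applications of Lemma \ref{inj}, recovering the version stated in \cite{Ca}, remarque 1.9. It is the weight-strictness of pullback under a resolution of a normal complete variety, i.e.\ the fact that $\mathrm{Gr}^{W}_{k}H^{k}$ embeds into $H^{k}$ of any resolution, that bridges the normal case to the smooth one.
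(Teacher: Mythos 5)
Your overall route coincides with the paper's: both reduce the statement, via $1$-formality and the equality $H^1(X,\Q)=H^1(Z',\Q)$, to showing that the cup products $\Lambda^2H^1\to H^2$ on $X$ and on $Z'$ have the same kernel (equivalently, that $\ker(a_{Z'}^*)=\ker(a_X^*)$ on $H^2(Alb(X))=\Lambda^2H^1$), and both settle that comparison by a weight/strictness argument resting on \cite{D}, 8.2.7. The paper invokes 8.2.7 in one line for the proper surjection $\sigma\colon X\to Z'$; you unpack the same strictness mechanism through a resolution $d\colon Z\to Z'$ together with Lemma \ref{inj}, which also requires a harmless preliminary modification of $X$ so that $\sigma$ actually lifts to $Z$ (this changes neither $\pi_1$ nor the cup-product kernel). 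Two points, however, need repair. First, the criterion you attribute to Remark \ref{r1} is not an independent fact you can quote: that remark merely records the criterion as a \emph{consequence} of Theorem \ref{normal compact}, so citing it here is circular. What your reduction genuinely requires is that $\pi_1(Z')$ be $1$-formal, so that its Malcev Lie algebra is presented by $H^1$ and the kernel of the cup product. For $X$ smooth projective this is classical (Deligne--Griffiths--Morgan--Sullivan), but for the \emph{normal} variety $Z'$ it is precisely \cite{ADH}, Theorem 1.2 and Corollary 1.3 --- this is the one substantive input your write-up leaves unsourced, and it is where the paper's proof starts.

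Second, the assertion that normality of $Z'$ gives $\pi_1(Z)\cong\pi_1(Z')$ for a resolution $d\colon Z\to Z'$ is false: $d_*$ is surjective but can have a large kernel, as the cone over a projective manifold $E$ shows ($Z'$ simply connected while $\pi_1(Z)\cong\pi_1(E)$); the paper makes exactly this point in the remark following the proof of Theorem \ref{qp normal}. Fortunately the conclusion you actually use --- that $d^*\colon H^1(Z',\Q)\to H^1(Z,\Q)$ is an isomorphism and that $H^1(Z',\Q)$ is pure of weight $1$ --- survives: your step (a) gives $H^1(Z')\cong H^1(X)$, this isomorphism factors through the injection $H^1(Z)\hookrightarrow H^1(X)$, so $H^1(Z')\cong H^1(Z)$, and purity then follows from $Z$ being smooth projective (alternatively, $H^1$ of any normal proper variety is pure of weight $1$). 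With these two repairs --- cite \cite{ADH} for the $1$-formality of $\pi_1(Z')$, and rederive $H^1(Z')\cong H^1(Z)$ without the false $\pi_1$ claim --- your argument is correct and amounts to the paper's proof with Deligne's 8.2.7 spelled out.
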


\begin{proof}  We can then apply \cite{ADH}, Theorem 1.2 and its corollary 1.3, which says that $\pi_1(Z')^{nilp}$ is $1$-formal. Since $H^1(X)=H^1(Z')$ (coefficients $\mathbb Q$), it is sufficient to show that $Ker(a_{Z'})^*:H^2(A)\to H^2(Z')=Ker(a_X^*)=Ker(a_{Z'}\circ \sigma)^*:H^2(A)\to H^2(X)$.This is a consequence of \cite{D}, 8.2.7.
\end{proof}

We now consider the open case. Recall the statement:

\begin{theorem}\label{normal} Let $X$ be a quasi-projective manifold, $\alpha_X:X\to Alb(X)$ its (proper) Albanese map, and $\sigma':X\to Z'$ its Stein factorisation (so that $Z'$ is normal). Then $\pi_1(X)^{nilp}=\pi_1(Z')^{nilp}$.
\end{theorem}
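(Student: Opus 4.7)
The plan is to follow the scheme of Theorem~\ref{normal compact} but upgraded to the mixed Hodge setting, exploiting the $1$-formality of $\pi_1(Z')^{nilp}$ for normal quasi-projective $Z'$ established in \cite{ADH}, together with the strictness of Deligne mixed Hodge structures on the relevant cohomology groups.

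First, factor $\alpha_X=\varphi'\circ\sigma'$ as in the paper's diagram, with $\sigma':X\to Z'$ the Stein factorisation (connected fibres, $Z'$ normal) and $\varphi':Z'\to Alb(X)$ finite. Connected fibres onto the normal $Z'$ make $\sigma'_*:\pi_1(X)\to\pi_1(Z')$ surjective. Combined with the $1$-formality of $\pi_1(Z')^{nilp}$ (\cite{ADH}), it then suffices to verify the Hodge-theoretic criterion underlying Remark~\ref{r1}: namely that $\sigma'^*$ induces an isomorphism on $H^1(-,\mathbb{Q})$ and identifies the kernels of the cup-product maps $\cup:\Lambda^2 H^1\to H^2$ on the two sides. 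Equivalently, once $\sigma'^*$ is an iso on $H^1$, these cup-product kernels are matched as soon as $\ker((\varphi')^*:H^2(Alb(X),\mathbb{Q})\to H^2(Z',\mathbb{Q}))=\ker(\alpha_X^*:H^2(Alb(X),\mathbb{Q})\to H^2(X,\mathbb{Q}))$.

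For the $H^1$ step, the defining property of the quasi-Albanese gives an iso $\alpha_X^*:H^1(Alb(X),\mathbb{Q})\to H^1(X,\mathbb{Q})$, and this factors as $\sigma'^*\circ(\varphi')^*$; surjectivity of $\sigma'_*$ on $\pi_1$ then forces $\sigma'^*$ to be injective on $H^1$, hence an isomorphism. For the $H^2$ step, apply the strictness of MHS morphisms (\cite{D}, 8.2.7) to $\sigma'^*:H^2(Z',\mathbb{Q})\to H^2(X,\mathbb{Q})$ together with the MHS factorisation $\alpha_X^*=\sigma'^*\circ(\varphi')^*$. This yields
\[
\ker\bigl((\varphi')^*:H^2(Alb(X),\mathbb{Q})\to H^2(Z',\mathbb{Q})\bigr)=\ker\bigl(\alpha_X^*:H^2(Alb(X),\mathbb{Q})\to H^2(X,\mathbb{Q})\bigr),
\]
which, via the $H^1$-isomorphism and the fact that $\Lambda^2 H^1(Alb(X),\mathbb{Q})\twoheadrightarrow H^2(Alb(X),\mathbb{Q})$ is a surjection, translates into the required identification of cup-product kernels on $X$ and $Z'$.

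The main obstacle is performing the $H^2$ comparison when $Z'$ is only normal. Deligne's MHS on $H^*(Z',\mathbb{Q})$ is built from a simplicial hyperresolution, and the strictness in the required form has to be extracted by passing through the smooth birational model $d:Z\to Z'$ of the paper's diagram: Lemma~\ref{inj} applied to the proper surjective map $\alpha:X\to Z$ between quasi-K\"ahler manifolds controls $\alpha^*$ on $H^2$ at the smooth level, while the Hodge-theoretic compatibility with the finite maps $\varphi,\varphi'$ transports the kernel identification from $Z,Z_0$ down to $Z',Z_0'$. The $1$-formality input from \cite{ADH} then packages the homotopy-theoretic side and closes the argument.
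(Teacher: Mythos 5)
There is a genuine gap, and it sits exactly at the heart of the theorem: the $H^2$ comparison. You assert that ``strictness of MHS morphisms (\cite{D}, 8.2.7)'' applied to $\sigma'^*:H^2(Z',\mathbb{Q})\to H^2(X,\mathbb{Q})$ together with the factorisation $\alpha_X^*=\sigma'^*\circ(\varphi')^*$ yields $\ker((\varphi')^*)=\ker(\alpha_X^*)$. That deduction would require $\sigma'^*$ (equivalently, after desingularising, $d^*:H^2(Z')\to H^2(Z)$) to be injective, and this is precisely what fails for normal targets: Remark \ref{r1}.2 of the paper gives compactified Hilbert modular surfaces $Z$ with cusps for which $H^2(Z)\to H^2(X)$ has nonzero kernel. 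Deligne's 8.2.7 concerns proper surjections from a \emph{smooth} source onto a possibly singular target, which is the opposite configuration from $\sigma'^*$ here; it cannot be quoted to make $\sigma'^*$ injective. The paper's actual proof has to work weight by weight on $\Gr^W_\bullet H^2$: for weights $3,4$ injectivity of $\Gr^W_iH^2(Z')\to\Gr^W_iH^2(Z)$ is genuinely true but requires the Mayer--Vietoris induction of Proposition \ref{injw>2} and its preceding lemma (and is already false in higher cohomological degree, cf.\ Remark \ref{rMS}); for weight $2$ injectivity can fail, and one instead proves directly that the two kernels inside $H^2(A)$ agree, via compactifications, the Gysin description of $\Gr^W_2$ in terms of Chern classes of boundary components, and \cite{D} 8.2.7 applied to the \emph{compactified} maps. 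It is only in this weight-$2$ step that the properness of $\alpha_X$ is used in an essential way (it gives $D_{Z'}=(a')^{-1}(D_A)$ and $D_Z=(a'd')^{-1}(D_A)$). Your $H^2$ argument never uses properness at that level, yet without it the statement is false (Example \ref{ex}: the $\mathbb{C}^*$-bundle over an elliptic curve has Heisenberg $\pi_1^{nilp}$ while the Albanese image has $\mathbb{Z}^2$); an argument that would go through without that input cannot be complete. The closing paragraph of your proposal, which proposes to ``transport the kernel identification'' from the smooth model $Z$ down to $Z'$, names the difficulty but does not resolve it: descending from $Z$ to $Z'$ is exactly where the kernel of $d^*$ obstructs the naive argument.

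A secondary but real issue is the homotopy-theoretic input. You invoke $1$-formality of $\pi_1(Z')^{nilp}$ for normal \emph{quasi-projective} $Z'$; \cite{ADH} establishes this only in the compact (normal projective) case, which is why the paper's Theorem \ref{normal compact} can use it. Fundamental groups of non-compact quasi-projective varieties are not $1$-formal in general -- the Heisenberg group of Example \ref{ex} is the standard counterexample -- so the cup-product criterion you state is not justified by formality. The sufficient condition you end up with (isomorphism on $H^1$ and equality of the kernels of $H^2(Alb(X))\to H^2(X)$ and $H^2(Alb(X))\to H^2(Z')$) does coincide with the one the paper uses, but the correct justification is the mixed-Hodge description of the Malcev Lie algebra (\cite{ADH}, Lemma 4.1 dualized, plus strictness to reduce to weight graded pieces), not $1$-formality.
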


\begin{proof} By \cite{ADH}, Lemma 4.1, dualized, and the fonctoriality of the description of the Mal\u cev Lie algebra given there, it is sufficient to show that $Ker(H^2(A)\to H^2(Z'))=Ker(H^2(A)\to H^2(Z))$, with $A:=Alb(X)$, and $H^2(A)=\wedge^2H^1(A)$, since $A$ is a (commutative, complex Lie) group. By strictness, it is sufficient to check this for the weight graduations.

We shall show this on the graded quotients for the weights $2,3,4$ of the weight filtrations, since this is checked for the weights $0,1$ in \cite{ADH}.

\medskip

$\bullet$ {\bf Weights 3,4.} For the weights $3,4$, this follows from the more general Proposition \ref{injw>2}, and does not use the properness of $\alpha_X$. The weight $2$, which depends on this properness, will be treated after this.

The proof for the weights $3$ and $4$  relies on the statements and proofs of  [PS, 5.37 and 5.53], using the notations there.

\begin{lemma} Let $g:\tilde{Y}\to Y$ be a resolution of singularities of a quasi-projective variety $Y$. Let $D\subset Y$ be the discriminant locus and $E=g^{-1}(D)$, then the induced maps $\Gr_i^W H^2(D)\to \Gr_i^W H^2(E)$ are injective for $i=3,4$.
\end{lemma}
\begin{proof}
We proceed by induction on the dimension of $Y$. The cases for $Y$ of dimension $0$ or $1$ are trivial.

 The Mayer-Vietoris sequence for the discriminant square of [PS, Corollary-Definition 5.37], and strictness of MHS morphisms give:
$$\Gr_i^W H^k(Y)\to \Gr_i^W H^k(\tilde{Y})\oplus \Gr_i^W H^k(D)\to \Gr_i^W H^k(E)\to \Gr_i^W H^{k+1}(Y) $$

First suppose that $D$ is of dimension $0$. As $g$ is a proper morphism, we have: $\Gr_i^W H^k(D)=0$ and $\Gr_i^W H^k(E)=0$ for $i>k$ and $k\geq 0$. The result follows for the zero dimensional case.

Now suppose that $\dim Y=n$ and that the statement is true for quasi-projective varieties of dimension strictly less than $n$. Consider resolutions of singularities $\tilde{E}\to E$,  $\tilde{D}\to D$ and discriminant squares for the right-vertical maps of the two following diagrams: 

$$\begin{tikzcd}
G \arrow[r] \arrow[d] & \tilde{E} \arrow[d] \\
F \arrow[r] & E
\end{tikzcd}
and \begin{tikzcd}
G' \arrow[r] \arrow[d] & \tilde{D} \arrow[d] \\
F' \arrow[r] & D
\end{tikzcd}$$

The Mayer-Vietoris sequence [PS, Corollary-Definition 5.37] reads:

$$\Gr_i^W H^{1}(G)\to \Gr_i^W H^2(E)\to \Gr_i^W H^2(\tilde{E})\oplus \Gr_i^W H^2(F)\to \Gr_i^W H^2(G) $$

By induction hypothesis, we have that $\Gr_i^W H^2(F)\to \Gr_i^W H^2(G)$ is injective for $i=3,4$. As $H^1(G)$ has only weights $0,1,2$, we conclude by a simple diagram chasing using the definition of the arrows in this exact sequence, that $\Gr_i^W H^2(E)\to \Gr_i^W H^2(\tilde{E})$ is injective for $i=3,4$.

Similarly, we obtain that that $\Gr_i^W H^2(D)\to \Gr_i^W H^2(\tilde{D})$ is injective for $i=3,4$.

Now, note that the vertical lines in \begin{tikzcd}
\tilde{E} \arrow[r] \arrow[d] & E\arrow[d] \\
\tilde{D} \arrow[r] & D
\end{tikzcd} are proper maps. As $\tilde{E}$ and $\tilde{D}$ are smooth, we can use Lemma 2 together with the above horizontal and left-vertical injectivities to conclude that $\Gr_i^W H^2(D)\to \Gr_i^W H^2(E)$ is injective as well, for $i=3,4$.\end{proof}

\begin{proposition}\label{injw>2}Let $f:\tilde{Y}\to Y$ be a resolution of singularities of a quasi-projective variety, then the induced morphism $\Gr_i^W H^2(Y)\to \Gr_i^W H^2(\tilde{Y})$ is injective for $i=3,4$.
\end{proposition}
\begin{proof}
Consider again the Mayer-Vietoris sequence for the discriminant square \begin{tikzcd}
E \arrow[r] \arrow[d] & \tilde{Y} \arrow[d] \\
D \arrow[r] & Y
\end{tikzcd}, with the notation of the previous lemma. 

Note that, for $i=3,4$, $\Gr_i^WH^2 (Y)\to \Gr_i^2 H^2(\tilde{Y})$ is injective, if $\Gr_i^W H^2(D)\to \Gr_i^W(E)$ is injective.

Indeed, as $H^1(E)$ has only weights $0,1,2$, the map $\Gr_i^W H^2(Y)\to \Gr_i^W H^2(D)\oplus \Gr_i^W H^2(\tilde{Y})$ is injective for $i=3,4$. We conclude by the previous Lemma.\end{proof}

$\bullet$ {\bf Weight 2.} We still have to deal with the weight $2$ graded part of the cohomology of $A,Z',Z$. Let $\overline{A},\overline{Z'},\overline{Z}$ be compatible compactifications of $A,Z',Z$ respectively, the compactifications of $A$ and $Z$ being `good'.  Recall that $A:=Alb(X)$.

\medskip

We thus have a comutative diagram, in which the maps are the obvious ones:

$$ \begin{matrix}
\xymatrix{Z\ar[r]^{j}\ar[d]^{d}& \overline{Z}\ar[d]^{d'}\\
Z'\ar[r]^{j'}\ar[d]^{a}&\overline{Z'}\ar[d]^{a'}\\
A\ar[r]^{b}&\overline{A}\\
}
\end{matrix}$$

\medskip

This diagram induces the following commutative diagram:

$$ \begin{matrix}
\xymatrix{Gr^W_2H^2(Z)& H^2(\overline{Z})\ar[l]^{j^*}& K\ar[l]&0\ar[l]\\
Gr^W_2H^2(Z')\ar[u]^{d^*}&H^2(\overline{Z'})\ar[l]^{j'^*}\ar[u]^{d'^*}&K'\ar[u]^{k}\ar[l]&0\ar[l]\\
Gr^W_2H^2(A)\ar[u]^{a^*}&H^2(\overline{A})\ar[u]^{a'^*}\ar[l]^{b^*}&L\ar[l]\ar[u]&0\ar[l]\\
}
\end{matrix}$$

The kernels $K,K',L$ are naturally defined. We have replaced $Gr_2^WH^2(\bullet)$ by $H^2(\bullet)$ in the top and bottom lines of the central column. The map $j'^*$ is well-defined because $H^2(\overline{Z'})$ has weights $0,1,2$ only.

Moreover, the map $b^*$ (and $j^*$ as well) is surjective, by \cite{D'}, Corollaire 3.2.17.

We want to show that $Ker(d^*a^*)=Ker(a^*)$, or equivalently, since $b^*$ is surjective, that $Ker(d^*a^*b^*)=Ker(a^*b^*)$, which means that $Ker(j^*d'^*a'^*)=Ker(j'^*a'^*)$, or still equivalently, that $(d'^*a'^*)^{-1}(K)=(a'^*)^{-1}(K')$.

The Gysin maps show that $L$ (resp. $K$) is the complex subspace of $H^2(\overline{A})$ (resp. $H^2(\overline{Z}$)) generated by the Chern classes of the irreducible components of $D_A:=\overline{A}\setminus A$ (resp. of $D_Z:=\overline{Z}\setminus Z$). 

Let similarly $D_{Z'}:=\overline{Z'}\setminus Z'$. We obviously have that $K'$ contains $K"$, the complex vector subspace of $H^2(\overline{Z'})$ generated by the Chern classes of the components\footnote{More precisely: of sums of such components of the form $(a')^{-1}(D)$, for $D$ any component of $D_{\overline{A}}$, which are Cartier.}of $D_{Z'}=(a')^{-1}(D_A)$ . 

Indeed, since $a_Z:Z\to Alb(Z)=:A$ is proper, $D_Z=(a'd')^{-1}(D_A)$, and $D_{Z'}=(a')^{-1}(D_A)$. 

From \cite{D},Proposition 8.2.7, we get: $Ker(a'^*)=Ker((a'd')^*):=J$. 

Thus $(d'^*a'^*)^{-1}(K)=L+J$, and contains $(a'^*)^{-1}(K')$, which itself contains $L+J$, since $K'$ contains $K"$. But $L+J=((a'd')^*)^{-1}(K)$ contains $(a'^*)^{-1}(K')$, which contains $L+J$ and we get the claimed equality: $Ker(a'd'j)^*=Ker(a'd')^*+L=L+Ker(a'^*)=Ker(aj')^*$\end{proof}

The preceding proof shows that the injectivity statement for $Gr^W_2H^2$ holds more generally in the situation of the next Lemma \ref{Gr2}, which gives a partial extension of Deligne's result \cite{D}, Proposition 8.2.7, used in an essential way in the proof.

\begin{lemma}\label{Gr2} Let $f:X\to Y, g:Y\to A$ be two regular maps between quasi-projective varieties, such that $X,A$ is smooth, $f$ is surjective, and $g\circ f$ is proper. Then $Ker(g\circ f)^*:Gr^W_2H^2(A)\to Gr^W_2H^2(X))$ and $Ker(f)^*:Gr^W_2H^2(A)\to Gr^W_2H^2(Y))$ coincide.
\end{lemma}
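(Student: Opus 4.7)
The plan is to recycle, with only notational changes, the weight-$2$ argument from the proof of Theorem \ref{normal}. Nothing there actually uses that $d\colon Z\to Z'$ is a desingularisation; the real input is that $d$ is proper with $\overline Z$ smooth projective (so Deligne's Proposition $8.2.7$ applies to it) and that $a\circ d$ is proper (so that $(a\circ d)^{-1}(D_A)=D_Z$). In our setting these roles are played by $f\colon X\to Y$ and by $g\circ f$, while the smoothness of $X$ and $A$ provides the required weight and Gysin inputs. (The stated kernel $Ker(f)^*$ must of course be read as $Ker(g^*)$, given the direction of the maps.)

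I would first choose good compactifications $\overline X,\overline A$ of $X,A$, any projective compactification $\overline Y$ of $Y$, and extensions $\bar f\colon \overline X\to \overline Y$, $\bar g\colon \overline Y\to\overline A$; set $\pi:=\bar g\circ\bar f$. Properness of $g\circ f$ forces $\pi^{-1}(D_A)=D_X$ with $D_A:=\overline A\setminus A$, $D_X:=\overline X\setminus X$, and one clearly has $\bar g^{-1}(D_A)\subset D_Y:=\overline Y\setminus Y$. Define $L,K,K'$ as the kernels of $H^2(\overline\bullet)\to \Gr^W_2H^2(\bullet)$ for $\bullet=A,X,Y$; by the Gysin sequence, $L$ and $K$ are spanned by the Chern classes of the irreducible components of $D_A$ and $D_X$. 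Applying Deligne's Proposition $8.2.7$ of \cite{D} to $\bar f$, proper surjective from the smooth projective variety $\overline X$, yields that $\bar f^*$ is injective on $H^2$, so that
\[ J:=Ker(\pi^*)=Ker(\bar g^*)\subset H^2(\overline A). \]

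The three steps to verify are then identical to those of Theorem \ref{normal}: (a) $\bar f^*(K')\subset K$, because for $v\in K'$ the class $j_Y^*v$ has weight $<2$ and $H^2(X)$ has weights $\geq 2$ (by smoothness of $X$), so $f^*j_Y^*v=j_X^*\bar f^*v=0$; (b) $L+J\subset (\bar g^*)^{-1}(K')\subset(\pi^*)^{-1}(K)$, combining (a) with $\bar g^*(L)\subset K'$ and $\pi^*(L)\subset K$, themselves immediate from $\bar g^{-1}(D_A)\subset D_Y$ and $\pi^{-1}(D_A)=D_X$; and (c) the reverse inclusion $(\pi^*)^{-1}(K)\subset L+J$, equivalently $Im(\pi^*)\cap K=\pi^*(L)$. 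Chaining (b) and (c) gives $(\bar g^*)^{-1}(K')=(\pi^*)^{-1}(K)=L+J$, and descending modulo $L$ turns this into the desired equality of kernels on $\Gr^W_2 H^2(A)=H^2(\overline A)/L$. The main obstacle is step (c); but this is exactly the computation already carried out in the weight-$2$ part of the proof of Theorem \ref{normal}, where one combines the injection $H^2(\overline A)/J\hookrightarrow H^2(\overline X)$ provided by Deligne's proposition with the Gysin description of $K$ and the identity $\pi^{-1}(D_A)=D_X$ to conclude that any pullback class supported on $D_X$ is the pullback of a divisor class on $D_A$. Nothing in that last step uses that the original $d$ was birational, so it transfers to our more general setup verbatim.
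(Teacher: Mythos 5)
Your proposal is correct and is essentially the paper's own proof: the paper establishes Lemma \ref{Gr2} precisely by observing that the weight-$2$ argument in the proof of Theorem \ref{normal} never uses that $d$ is birational, only that the source and $A$ are smooth, that the middle map is proper surjective, and that the composite to $A$ is proper — which is exactly the accounting you carry out (including the correct reading of $Ker(f)^*$ as $Ker(g^*)$). One small imprecision: Deligne's Proposition 8.2.7 applied to $\bar f\colon \overline X\to\overline Y$ gives $Ker(\bar f^*)=W_1H^2(\overline Y)$, which need not vanish, rather than injectivity of $\bar f^*$; the equality $J=Ker(\pi^*)=Ker(\bar g^*)$ you want still follows because $H^2(\overline A)$ is pure of weight $2$ and morphisms of mixed Hodge structures are strict, which is also how the paper extracts this from 8.2.7.
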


\begin{remark} We do not know the answer to the following question. Let $f:X\to Y$ and $g:Y\to A$ be regular proper maps between quasi-projective varieties, such that $X$ and $A$ are smooth and $f$ is surjective. Does $Ker(g\circ f)*:Gr_i^W H^k(A)\to Gr_i^W H^k(X$) and $Ker(f)*: Gr_i^W H^k(A)\to Gr_i^W H^k(Y)$ coincide for all $i$?
\end{remark} 

\begin{remark}\label{rMS} The following examples, communicated to us by M. Saito \cite{M}, show that Proposition \ref{injw>2} does not necessarily hold for higher degrees: if $f:\tilde{Y}\to Y$ is a resolution of singularities of a quasi-projective variety, the induced morphisms $\Gr_i^W H^n(Y)\to \Gr_i^W H^n(\tilde{Y})$ do not need to be injective for $i=n, \ldots, 2n-2$ if $2n-2\geq n$. 
\begin{itemize}
\item 
Assume $X=X_1\times X_2$ with $X_1$ smooth and $X_2$ compact, e.g,
$X_1=\mathbb{C}^*$ and $X_2$ is the union of the zero section and a section of
a very positive line bundle on $\mathbb{P}^2$ whose zero locus $C$ is smooth.
Then $\tilde{X_2}=\mathbb{P}^2\cup \mathbb{P}^2$, but $\Gr^W_3 H^3(X)$ contains
$\Gr^W_2H^1(X_1)\times Gr^W_1H^2(X_2)$ and $Gr^W_1H^2(X_2)=H^1(C)(-1)$.

We can also have $X_2$ irreducible. Let $X=X_1\times X_2$ with $X_1=\mathbb{C}^*$ and $X_2$ the compactification of $f=f_1+f_2=0$ in
$\mathbb{C}^3$ with $f_1$,$f_2$ homogeneous polynomials of degree $d$ and $d+1$ having isolated
singularities at $0$. Here $\tilde{X_2}$ is the blow-up of $\mathbb{P}^2$ along the intersections of
the two curves defined by $f_1$, $f_2$, which are assumed to be transversal.
\item For the case $n$ is at most $2\dim X-2$, consider for instance $X=\mathbb{C}^*\times D\times A$
where $D$ is a plane curve with lots of nodes and $A$ is an abelian variety
\end{itemize}
\end{remark}

\section{Proof of Theorem \ref{qp normal}}

Before giving the proof, we need to define the quasi-Albanese map $\alpha_X:X\to Alb(X)$ for $X$ normal quasi-projective, inducing an isomorphism at $H^1$ level. This is done in the  Appendix on Weight-$1$ MHS (where it is denoted $\alpha_X:X\to A_X$). The crucial property used is: $H^1(X)=H^1(Alb(X))$ for coefficients either $\Q,\R$, or $\C$ (more precisely, this is true with $\Z$-coefficients, up to torsion).

\medskip

{\bf Proof of Theorem \ref{qp normal}:} We consider a smooth model $\tilde{X}$ of $X$ and a normal model $Z_0'$ of the image $Z_0''=\alpha_X(X)$. Consider the composed map $\tilde{X}\to Z'\to A:=Alb(X)$. This composed map is proper, since so are $\tilde{X}\to X$ and $X\to A$, by assumption. Moreover, $\tilde{X}$ and $A$ are smooth quasi-projective, and $\tilde{X}\to Z'$ is surjective. By Lemma \ref{Gr2}, the kernels of $\Gr^W_i H^2(A)\to \Gr^W_i H^2(\tilde{X})$ and of $\Gr^W_i H^2(A)\to \Gr^W_i H^2(Z')$ coincide, and so thus do the kernels of $\Gr^W_i H^2(A)\to \Gr^W_i H^2(X)$ and of $\Gr^W_i H^2(A)\to \Gr^W_i H^2(Z')$.  Moreover, the injectivity for $i=3,4$ of the maps $Gr^W_iH^2(X)\to Gr^W_iH^2(\tilde{X})$ is garanteed by Lemma \ref{injw>2}. We have the same property for a desingularisation $\tilde{Z'}\to Z'$, from which we deduce (as in the proof of Proposition \ref{injw>2}), the injectivity of $Gr^W_iH^2(Z')\to Gr^W_iH^2(X)$, and finally the equality of the kernels of $H^2(A)\to H^2(Z')$ and of $H^2(A)\to H^2(X)$, which gives the conclusion of Theorem \ref{qp normal}, since $dim H^1(X,\Bbb C)=dim(Alb(X))$.

\begin{remark} The kernel of the map $\pi_1(\widetilde{X})\to \pi_1(X)$ may be large, as shown by the cone $X$, either affine or projective, over a smooth projective manifold $E$, since $X$ is simply connected, while $\pi_1(\widetilde{X})\cong \pi_1(E)$. More generally:

\end{remark}

\begin{question} Let $d:\widetilde{X}\to X$ be the desingularisation of a normal quasi-projective variety, and $d_*:\pi_1(\widetilde{X})\to \pi_1(X)$ the induced morphism. Is the kernel $K$ of $d_*$ equal to the group $K'$ generated by the images in $\pi_1(X)$ of the $\pi_1(F)$, when $F$ runs through the fibres of $d$? It is clear that $K'\subset K$.

The second property stated in Proposition \ref{prop alb} shows that this is true for the torsionfree Abelianisations of the $\pi_1$'s, and Theorem \ref{qp normal} that it extends to  the torsionfree nilpotent completions when $\alpha_X$ is proper.
\end{question}

\section{Special Manifolds}

We show in this section that the Albanese map, if proper, is surjective with connected fibres if $X$ is a `special' quasi-projective manifold. For motivation and details about the class of special manifolds, see \cite{Ca04},\cite{Ca11}, and \cite{Ca16}. Recall first one of the possible definitions:

\begin{definition}\label{defspec} Let $X$ be a quasi-K\"ahler manifold together with a good compactification $X=\overline{X}\setminus D$.
We say that $X$ is special if, for any integer $p>0$ and any $L\subset \Omega^p_{\overline{X}}(Log(D))$, coherent and of rank $1$, we have: $\kappa(\overline{X}, L)<p$.
\end{definition}

This definition is short, but does not explicitely show the geometric significance of this notion: no surjective algebraic map $f:X\to Y$ has an `orbifold base' $(Y,\Delta_f)$ of Log-general type. Special manifold should be considered as higher-dimensional generalisations of quasi-projective curves with Log-canonical bundle of non-positive degree (ie: rational or elliptic curves in the compact case).

The importance of the notion of specialness comes from the existence, for any $X$, of its `core fibration' $c:X\to C(X)$, which splits $X$ into its antithetic parts: special (the general fibres), and Log-general type (the `orbifold base' $(C(X),\Delta_c)$.

Recall that the log-canonical bundle of $X$ is $K_{\overline{X}}+D=det(\Omega^1_{\overline{X}}(Log(D)))$, and that $\overline{\kappa}(X):=\kappa(\overline{X},K_{\overline{X}}+D)$. Also, $X$ is compact if and only if $D=0$, in which case $\overline{\kappa}(X)=\kappa(X)$.

\begin{proposition} Let $X$ be a quasi-K\"ahler manifold $X$.

1. If $X$ is special, and if $f:X\to Z$ is a dominant meromorphic map, then $\overline{\kappa}(Z)<dim(Z):=p>0$.

2. if $\overline{\kappa}(X)=0$, then $X$ is special.
\end{proposition}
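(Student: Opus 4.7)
The plan for Part~1 is a direct application of the definition of specialness. After suitable blowups, choose good compactifications $(\overline{X},D)$ of $X$ and $(\overline{Z},D_Z)$ of $Z$ such that $f$ extends to a surjective morphism $\overline{f}:\overline{X}\to\overline{Z}$ with $\overline{f}^{-1}(D_Z)\subseteq D$ and both boundaries SNC; the notion of specialness is preserved under such a change of good compactification, so we can work with this model. The functorial logarithmic pullback
\[
\overline{f}^{*}\Omega^p_{\overline{Z}}(\log D_Z)\longrightarrow \Omega^p_{\overline{X}}(\log D)
\]
is injective over the locus where $\overline{f}$ is submersive; taking determinants and saturating yields a coherent rank-$1$ subsheaf $L\subseteq \Omega^p_{\overline{X}}(\log D)$ containing $\overline{f}^{*}(K_{\overline{Z}}+D_Z)$. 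Since $\overline{f}$ is dominant, pullback of log-pluricanonical sections gives $\kappa(\overline{X},L)\geq \overline{\kappa}(Z)$. Specialness of $X$ forces $\kappa(\overline{X},L)<p$, so $\overline{\kappa}(Z)<p$.

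For Part~2, assume for contradiction that $\overline{\kappa}(X)=0$ yet $X$ is not special: there exist $p\geq 1$ and a rank-$1$ coherent $L\subseteq \Omega^p_{\overline{X}}(\log D)$ with $\kappa(\overline{X},L)=p$. The top-degree case $p=n:=\dim_{\mathbb{C}}X$ is immediate, since $\Omega^n_{\overline{X}}(\log D)=\mathcal{O}_{\overline{X}}(K_{\overline{X}}+D)$: the inclusion $L\subseteq \mathcal{O}_{\overline{X}}(K_{\overline{X}}+D)$ produces an effective divisor $E$ with $K_{\overline{X}}+D\sim L+E$, hence $\overline{\kappa}(X)\geq \kappa(\overline{X},L)=n>0$, contradicting $\overline{\kappa}(X)=0$.

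In the intermediate range $0<p<n$, the plan is to invoke the logarithmic Bogomolov-type theorem of Campana (see \cite{Ca11}, \cite{Ca04}): after saturating $L$, the maximality $\kappa(\overline{X},L)=p$ yields a dominant almost-holomorphic meromorphic fibration $g:X\dashrightarrow Y$ with $\dim Y=p$, such that $L$ agrees (up to an orbifold correction supported on the exceptional locus of $g$) with $g^{*}(K_{\overline{Y}}+\Delta_g)$, where $(Y,\Delta_g)$ is the Campana orbifold base of $g$, and $(Y,\Delta_g)$ is of log-general type. The contradiction then comes from the orbifold log-subadditivity $\overline{\kappa}(X)\geq \overline{\kappa}(Y,\Delta_g)$ available when $\overline{\kappa}(X)\leq 0$ (it reduces to the statement that a special fibration cannot map onto a positive-dimensional log-general type orbifold, already used implicitly in the proof of Part~1): this forces $0\geq p>0$.

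The main obstacle is precisely this case $0<p<n$ of Part~2, i.e.\ extracting from the rank-$1$ Bogomolov sheaf $L$ an actual fibration with log-general type orbifold base, and then invoking the right orbifold log-subadditivity statement to rule out such a quotient when $\overline{\kappa}(X)=0$. Part~1, together with the top-degree case of Part~2, is purely formal once compatible good compactifications have been fixed; the substantive content lies in the log-Bogomolov machinery of Campana and the orbifold additivity it feeds into.
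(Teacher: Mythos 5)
Your Part~1 is essentially the paper's argument: the paper also takes $L:=f^{*}(K_{\overline{Z}}+D_Z)\subset\Omega^p_{\overline{X}}(\log D)$, observes that pullback of log-pluricanonical sections is injective so that $\kappa(\overline{X},L)=\overline{\kappa}(Z)$, and concludes from the definition of specialness; you merely phrase it as a direct inequality rather than a contradiction, and you are slightly more careful about resolving the indeterminacy of $f$ and matching the two boundaries. That part is fine.

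For Part~2 the paper gives no argument at all: it simply cites \cite{Ca11}, Th\'eor\`eme 7.7. Your sketch reconstructs the strategy behind that theorem (Bogomolov sheaves of maximal Kodaira dimension give rise to fibrations with orbifold base of log-general type, which an $X$ with $\overline{\kappa}(X)=0$ cannot admit), and you correctly flag the case $0<p<n$ as the substantive one. Two caveats. First, the two pillars you invoke --- the existence of the fibration attached to a saturated Bogomolov sheaf with log-general-type orbifold base, and the inequality $\overline{\kappa}(X)\geq\overline{\kappa}(Y,\Delta_g)$ when the orbifold base is of log-general type --- are themselves the deep content of Campana's theory (the latter resting on Viehweg-type weak positivity of direct images), so your argument is a proof outline of the cited theorem rather than a proof. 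Second, your parenthetical claim that this subadditivity ``reduces to'' something ``already used implicitly in the proof of Part~1'' is misleading and borders on circularity: Part~1 uses only the definition of specialness and is purely formal, whereas the statement that a manifold with $\overline{\kappa}=0$ cannot fibre over a positive-dimensional log-general-type orbifold is precisely the nontrivial assertion you are trying to establish. Given that the paper itself disposes of Part~2 by citation, doing the same (or citing the precise weak-positivity input) would be cleaner than a partial reconstruction whose key steps are left unproved.
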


\begin{proof} First claim:  Since $L:=f^*(K_{\overline{Z}}+D_Z))\subset \Omega^p_{\overline{X}}(Log(D))$, $f$ induces, for any $m>0$, an injective linear map $f^*:H^0(Z, m.(K_{\overline{Z}}+D_Z))\to H^0(X,Sym^m(\Omega^p_{\overline{X}}(Log(D)))$, where $(\overline{X},D)$ and $(\overline{Z},D_Z)$ are good compactifications of $X,Z$ respectively. We thus have: $p=\kappa(\overline{Z},K_{\overline{Z}}+D_Z)=\kappa(\overline{X}, f^*(K_{\overline{Z}}+D_Z))$, contradicting the specialness of $X$.

The second claim is \cite{Ca11}, Th\'eor\`eme 7.7.
\end{proof}

\begin{corollary}\label{cspec}Let $X$ be a special\footnote{The proof shows that the conclusion holds more generally if $X$ is `weakly special': no finite \'etale cover of $X$ maps onto a variety of Log-general type.} quasi-K\"ahler manifold. Assume that $\alpha_X:X\to Alb(X)$ is proper. Then every torsionfree nilpotent quotient of $\pi_1(X)$ is abelian, quotient of $H_1(X,\mathbb Z)/Torsion$. 
\end{corollary}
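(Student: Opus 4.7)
The plan is to reduce to Corollary~\ref{cor nilp smooth}.2, which already yields $\pi_1(X)^{nilp}=H_1(X,\Z)/\mathrm{Torsion}$ as soon as $\alpha_X$ is surjective; once this is established, any torsionfree nilpotent quotient of $\pi_1(X)$ factors through $\pi_1(X)^{nilp}$ and is therefore a quotient of the abelian group $H_1(X,\Z)/\mathrm{Torsion}$, giving the stated conclusion.

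The work is thus to prove surjectivity of the proper map $\alpha_X$ from the (weak) specialness assumption. Let $Z_0'':=\alpha_X(X)\subset\mathrm{Alb}(X)$, a closed subvariety by properness, and let $B\subset\mathrm{Alb}(X)$ be the identity component of its stabilizer $\{b\in\mathrm{Alb}(X):b+Z_0''=Z_0''\}$. By the Ueno--Kawamata structure theorem for closed subvarieties of a semi-abelian variety (originating with Ueno in the abelian case, with the semi-abelian extension due to Kawamata and Noguchi), $Z_0''$ is a union of $B$-cosets and the quotient $Z_0''/B\hookrightarrow\mathrm{Alb}(X)/B$ satisfies $\overline{\kappa}(Z_0''/B)=\dim(Z_0''/B)$, i.e.\ is of log-general type. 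The composition $X\to Z_0''\to Z_0''/B$ is then a dominant regular map onto this variety; by the Proposition just stated, (weak) specialness of $X$ forces $\dim(Z_0''/B)=0$, so that $Z_0''=c+B$ for a single coset. The universal property of the quasi-Albanese (equivalently, the surjectivity of $(\alpha_X)_*\colon H_1(X,\Z)\to H_1(\mathrm{Alb}(X),\Z)$ modulo torsion, by construction of $\mathrm{Alb}(X)$) implies that the differences $\alpha_X(x)-\alpha_X(x_0)\in B$ generate $\mathrm{Alb}(X)$ as a group, so $B=\mathrm{Alb}(X)$ and $\alpha_X$ is surjective.

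The main obstacle I anticipate is the precise citation and application of the Ueno--Kawamata structure theorem in the semi-abelian (rather than purely abelian) setting, for the possibly singular subvariety $Z_0''$: one should work on a log-resolution of $(\overline{Z_0''},\partial)$ inside a smooth equivariant compactification of $\mathrm{Alb}(X)$ in order to interpret $\overline{\kappa}(Z_0''/B)=\dim(Z_0''/B)$ as the assertion that $Z_0''/B$ is of log-general type, which is the form needed to feed into the Proposition. Everything else is either routine or directly supplied by the paper's earlier results.
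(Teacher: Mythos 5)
Your proposal is correct and follows essentially the same route as the paper: the paper also reduces to Corollary \ref{cor nilp smooth}.2 and proves surjectivity of the proper map $\alpha_X$ via a lemma (Lemma \ref{Albdom}) asserting that any proper Zariski-closed subset of $\mathrm{Alb}(X)$ dominates a positive-dimensional variety of log-general type, citing Kawamata's Theorem 27 --- precisely the Ueno--Kawamata structure theorem you invoke and then contradicting specialness. Your write-up merely unpacks that citation (the stabilizer $B$, the quotient $Z_0''/B$ of log-general type, and the generation argument forcing $B=\mathrm{Alb}(X)$), which is a slightly more careful rendering of the same argument.
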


\begin{proof} This is an immediate consequence of Corollary \ref{cor nilp smooth}.2 and the next Lemma \ref{Albdom}.
\end{proof}

\begin{lemma}\label{Albdom} Let $\alpha_X:X\to Alb(X)$ be the quasi-Albanese map of the quasi-projective\footnote{Here again, the conclusion should hold in the quasi-K\"ahler case.} manifold $X$. Then $\alpha_X$ is dominant (hence surjective if it is proper) and has connected fibres if $X$ is special, in particular if $\overline{\kappa}(X)=0$. 
\end{lemma}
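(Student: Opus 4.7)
The plan is to derive both assertions from Kawamata's structure theorem on closed subvarieties of semi-abelian varieties, combined with the universal property of the quasi-Albanese and the specialness of $X$ (via the preceding Proposition, which forbids dominant maps from a special manifold onto a variety of log-general type).

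For \textbf{dominance}, I set $Z_0'' := \alpha_X(X) \subseteq A := Alb(X)$ and apply Kawamata's theorem: there exists a semi-abelian subvariety $B \subseteq A$, maximal with $Z_0'' + B = Z_0''$, such that the image $Z_0''/B \subseteq A/B$ is either of log-general type or a point. If $\dim(Z_0''/B) > 0$, composing $\alpha_X$ with the quotient $A \to A/B$ yields a dominant map from the special manifold $X$ onto a positive-dimensional log-general-type variety, contradicting the preceding Proposition. Hence $Z_0'' = a + B$ for some $a \in A$. The universal property of $Alb(X)$ forbids $\alpha_X(X)$ from lying in a proper translate of a semi-abelian subvariety of $A$ (otherwise universality would factor $\alpha_X$ through a strictly smaller semi-abelian target via a splitting of $Alb(X)\to B$, contradicting the definition of $Alb(X)$). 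Therefore $B = A$, giving $Z_0'' = A$, so $\alpha_X$ is dominant and, being proper, surjective.

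For \textbf{connected fibres}, form the Stein factorization $X \xrightarrow{\sigma} Z' \xrightarrow{\varphi} A$, with $\varphi$ finite surjective of degree $e$. I first handle the étale case: if $\varphi$ is étale of degree $> 1$, then $Z'$ is itself a semi-abelian variety (étale cover of a semi-abelian variety being semi-abelian), and universality of $A = Alb(X)$ applied to $\sigma : X \to Z'$ yields a homomorphism $A \to Z'$ whose composition with $\varphi$ equals (up to translation) the identity of $A$; this homomorphism is a section of $\varphi$, which splits $Z'$ and contradicts its connectedness. For the general (possibly ramified) case, I pass to a desingularization $\widetilde{Z'} \to Z'$ together with a compatible blow-up $\widetilde{X} \to X$ admitting a morphism $\widetilde{X} \to \widetilde{Z'}$; specialness is preserved by blow-ups, so $\widetilde{X}$ is still special. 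Since $A$ is semi-abelian, $K_{\overline{A}} + D_A \equiv 0$, and hence $K_{\overline{\widetilde{Z'}}} + D_{\widetilde{Z'}}$ is numerically equivalent to the log-ramification divisor $R$ of $\widetilde{Z'} \to A$. Non-trivial ramification would then, combined with specialness of $\widetilde{X}$ and the dominant morphism $\widetilde{X} \to \widetilde{Z'}$, produce a dominant map from $\widetilde{X}$ onto the Campana orbifold base of $\sigma$, which would be of log-general type, once again contradicting the preceding Proposition.

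The \textbf{main obstacle} is the ramified case of the connected-fibres statement: the étale argument alone leaves ramified finite covers of $A$ unaddressed, and a careful orbifold / log-Kodaira analysis of $\widetilde{Z'} \to A$ is required to promote specialness of $X$ into a contradiction with $e > 1$ whenever $\varphi$ has non-trivial ramification.
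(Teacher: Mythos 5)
Your dominance argument is correct and is essentially the paper's: the text reduces the claim to the statement that a proper Zariski-closed subset of $Alb(X)$ dominates a positive-dimensional variety of log-general type, and cites Kawamata's theorem for this; your version, which makes explicit both the Ueno--Kawamata fibration $Z_0''\to Z_0''/B$ and the use of the universal property of the quasi-Albanese to exclude the case $Z_0''=a+B$ with $B\subsetneq A$, is if anything more careful than the two-line proof in the paper (which, read literally, overlooks the translate case). The \'etale case of your connected-fibres argument is also fine: the section $h$ of $\varphi$ obtained from universality has open and closed image, hence forces $e=1$.

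The genuine gap is the one you flag yourself: the ramified case of the connected-fibres statement. Your reduction ``$K_{\overline{\widetilde{Z'}}}+D_{\widetilde{Z'}}$ is numerically the log-ramification divisor $R$, hence nontrivial ramification yields a log-general-type target'' does not work as stated: $R\neq 0$ only gives $\overline{\kappa}(Z')\geq 0$, and this can take any value strictly between $0$ and $\dim Z'$, so $Z'$ need not be of log-general type; moreover the orbifold base of $\sigma$ is not the relevant object, since $\sigma$ has connected fibres by construction of the Stein factorization --- the ramification to be analysed is that of the finite map $\varphi:Z'\to A$. The correct tool is the finite-cover half of Kawamata's theorem (the same source the paper cites for dominance): for a normal variety $Z'$ with a finite surjective morphism onto a semi-abelian variety $A$, one has $\overline{\kappa}(Z')\geq 0$; if $\overline{\kappa}(Z')=0$ the cover is \'etale and $Z'$ is semi-abelian, so your \'etale argument applies and forces $e=1$; and if $\overline{\kappa}(Z')>0$ then $Z'$ admits a dominant map onto a positive-dimensional variety of log-general type, which composed with $\sigma$ contradicts specialness of $X$ via part 1 of the preceding Proposition. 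For what it is worth, the paper's own proof only addresses dominance explicitly and leaves the connected-fibres assertion to the same citation, so your treatment, once completed along these lines, is more detailed than the text.
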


\begin{proof} It is sufficient to show that any Zariski-closed subset $V\subsetneq Alb(X)$ of $Alb(X)$  admits a dominant map onto a positive-dimensional quasi-projective $W$ of general type, that is such that $\overline{\kappa}(W)=dim(W)$.This follows from \cite{Kaw}, Theorem 27.
\end{proof}

\begin{remark}\label{notproper} The conclusion of Corollary \ref{cspec} may fail if $\alpha_X$ is not proper, even if $K_{\overline{X}}+D$ is trivial. See Example \ref{ex} below.
\end{remark}

Recall the Abelianity conjecture.

\begin{conjecture}\label{Ab conj} (\cite{Ca04}, Conjecture 7.1)Let $X$ be a compact K\"ahler manifold. If $X$ is special, then $\pi_1(X)$ is virtually abelian.
\end{conjecture}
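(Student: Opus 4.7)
The plan is to start from Corollary \ref{cspec}, which already handles the torsion-free nilpotent part: since $X$ is compact, the Albanese map is automatically proper, so every torsion-free nilpotent quotient of $\pi_1(X)$ is a quotient of $H_1(X,\mathbb{Z})/\text{Torsion}$. The remaining task is to upgrade this from ``nilpotent quotients are abelian'' to the stronger ``$\pi_1(X)$ is virtually abelian.'' That gap is exactly where the difficulty lies, and it requires going beyond the nilpotent completion.

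My first move would be to reduce to understanding linear representations. Via non-abelian Hodge theory (Corlette--Simpson), every semisimple representation $\rho : \pi_1(X) \to GL_n(\mathbb{C})$ comes from a polystable Higgs bundle $(E,\theta)$ with vanishing Chern classes. For special $X$, one would argue that the Higgs field $\theta$ must be nilpotent-of-specific-type: otherwise one constructs a saturated line subsheaf of some $\Omega^p_X$ with Kodaira--Iitaka dimension $\geq p$, contradicting Definition \ref{defspec}. This should force the image of $\rho$ to be virtually abelian, and moreover to factor, up to finite index, through $\alpha_X : X \to \text{Alb}(X)$. Together with Corollary \ref{cspec}, every linear representation would then factor through a finite cover of $\text{Alb}(X)$.

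The next step would be to bootstrap from linear representations to $\pi_1(X)$ itself. One would attempt to show that the Shafarevich morphism $\mathrm{sh}_X : X \to \text{Sh}(X)$ associated with the family of all linear (or reductive) representations of $\pi_1(X)$ degenerates to a map to the Albanese (or to a finite quotient thereof), by combining the previous step with the analog of the Shafarevich reduction in the reductive case (Eyssidieux, Katzarkov--Ramachandran). If successful, this would give that the linear part of $\pi_1(X)$ is virtually abelian; combined with the nilpotent statement, one would then try to conclude that $\pi_1(X)$ itself is virtually abelian.

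The main obstacle, and the reason this remains a conjecture, is twofold. First, one must handle non-reductive representations, where specialness of $X$ does not so obviously constrain the Higgs data, and where Hodge-theoretic rigidity is weaker. Second, and more seriously, one must control the \emph{non-linear} part of $\pi_1(X)$: there is no general result ensuring that a fundamental group of a compact K\"ahler manifold is residually linear, so even if every linear representation has virtually abelian image, exotic quotients could in principle survive. Bridging this gap appears to require either a positive answer to the K\"ahler Shafarevich conjecture for special manifolds or a substantial new input beyond the mixed Hodge and Higgs-bundle techniques used in the rest of the paper.
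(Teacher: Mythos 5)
This statement is a conjecture, not a theorem: the paper states it as Conjecture \ref{Ab conj} (quoting \cite{Ca04}, Conjecture 7.1) precisely because no proof is known, and the paper offers none. So there is no ``paper's own proof'' to compare against, and your proposal is not a proof either --- as you yourself acknowledge in your final paragraph. What you have written is a plausible strategy sketch whose two open steps are exactly the reasons the statement remains conjectural.

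That said, your sketch is a reasonable reconstruction of the state of the art, and it is worth noting where it matches what is actually established. The linear-quotient case you outline via Corlette--Simpson and the specialness constraint on Higgs fields is essentially the content of \cite{Ca04}, Theorem 7.8, which the paper cites immediately after the conjecture: the abelianity conjecture \emph{is} known for linear quotients of K\"ahler groups. Your first reduction (torsion-free nilpotent quotients are abelian, via Corollary \ref{cspec} and properness of the Albanese map in the compact case) is also correct and is the contribution of the present paper. The two gaps you name --- non-reductive representations and, more fundamentally, the absence of any control on non-residually-linear quotients of K\"ahler groups --- are genuine and are precisely why the statement cannot currently be proved. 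One should also note that the paper's broader point is cautionary: in the non-compact quasi-K\"ahler setting the abelianity conclusion actually \emph{fails} (Example \ref{ex}, the Heisenberg $\C^*$-bundle over an elliptic curve), which is why the authors replace Conjecture \ref{Ab conj} by the Nilpotency Conjecture \ref{Nilp conj} in the open case. Any attempted proof must therefore use compactness (equivalently, properness of $\alpha_X$) in an essential way, as your first step does but your Higgs-theoretic steps would also need to.
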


This conjecture is shown to be true for the linear quotients of K\"ahler groups (\cite{Ca04}, Theorem 7.8). It is stated also in the quasi-K\"ahler case in \cite{Ca11}, Conjecture 12.10. However, non-compact special quasi-K\"ahler manifolds with torsionfree nilpotent, non-abelian, fundamental groups do exist, as first noticed in \cite{CDY}. See examples below. In the open case, we thus restate this as the `Nilpotency conjecture' (as done in \cite{CDY}):

\begin{conjecture}\label{Nilp conj} If $X$ is a special quasi-K\"ahler manifold, $\pi_1(X)$ is virtually nipotent. 

If $X$ is `slope-rationally connected', then $\pi_1(X)$ is virtually abelian.
\end{conjecture}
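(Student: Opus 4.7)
The plan is to handle both claims by extending the methods of this paper, the key new input being a structural reduction that controls the non-proper directions of the quasi-Albanese map.

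For the virtual nilpotency claim, I would first try to factor $\alpha_X : X \to \mathrm{Alb}(X)$ through its ``maximal proper part'': after a finite \'etale cover (harmless for virtual statements), find a quotient semi-abelian variety $A$ of $\mathrm{Alb}(X)$ such that the composed map $X \to A$ is proper and $A$ is maximal with this property. Applying Theorem \ref{qp normal} to this proper factor would control $\pi_1(X)^{nilp}$ in terms of $\pi_1$ of the normalized image of $X$ in $A$, which is itself a special quasi-K\"ahler variety of smaller dimension; induction would then reduce the problem to understanding the general non-proper fibers $F$ of $X \to A$ and their own fundamental groups.

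The main obstacle is controlling these non-proper fibers. By specialness they are themselves special quasi-K\"ahler manifolds whose quasi-Albanese maps are non-proper, and the examples of \cite{CDY} (see Section 6) show that genuine torsion-free non-abelian nilpotent quotients arise in this setting, coming from iterated $\mathbb{C}^*$-bundles over abelian varieties. The conjecture thus predicts that such semi-abelian iterated fibrations are essentially the only source of non-abelian nilpotent structure. Proving this seems to require either a Shafarevich-type reduction in the spirit of \cite{CDY}, or a non-abelian Hodge argument with logarithmic poles along the boundary, to show that every torsion-free nilpotent quotient of $\pi_1(F)$ is ``geometric'', factoring through a tower of semi-abelian fibrations of log-Calabi--Yau type. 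A uniform bound on the nilpotency step would then come from the bounded dimension of $F$.

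For the second claim, I would combine virtual nilpotency with an orbifold Kawamata-type statement extending Lemma \ref{Albdom}: a slope-rationally connected manifold admits no non-trivial dominant map to a positive-dimensional semi-abelian variety, since any proper subvariety would have, by Kawamata's theorem, an \'etale cover dominating a variety of log-general type, contradicting SRC. Hence $\mathrm{Alb}(X)$ is trivial, $H^1(X, \mathbb{Q}) = 0$, and every torsion-free nilpotent quotient of $\pi_1(X)$ is trivial (by induction up the lower central series, using that each graded piece injects into a power of $H^1$). Combined with virtual nilpotency, this forces $\pi_1(X)$ to be virtually finite, in particular virtually abelian. The remaining technical point is to extend Kawamata's theorem to the full quasi-K\"ahler setting (currently used in the quasi-projective case) and to verify that the inductive fibration step of the first part carries through in the SRC situation without creating new obstructions at infinity.
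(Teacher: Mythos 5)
The statement you are trying to prove is stated in the paper as a \emph{conjecture} (Conjecture \ref{Nilp conj}); the authors give no proof, and indeed introduce it precisely because the examples of Section 6 (the $\C^*$-bundles over elliptic curves, first noticed in \cite{CDY}) destroy the stronger Abelianity conjecture in the non-proper case. Your text is honestly a research programme rather than a proof, and it contains gaps that you yourself flag. The central one is the assertion that every torsionfree nilpotent quotient of $\pi_1(F)$, for $F$ a non-proper fibre of the ``maximal proper part'' of $\alpha_X$, is geometric and factors through a tower of semi-abelian fibrations: this is exactly the content of the conjecture for $F$, and saying it ``seems to require either a Shafarevich-type reduction or a non-abelian Hodge argument'' does not supply one. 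A second structural problem is the inductive reduction itself: even if you knew that the image of $X$ in $A$ and the fibres $F$ both have virtually nilpotent fundamental groups, $\pi_1(X)$ sits in an extension of (a quotient of) one by (the image of) the other, and an extension of a virtually nilpotent group by a virtually nilpotent group need not be virtually nilpotent (already $\Z^2\rtimes_M\Z$ for $M$ hyperbolic is polycyclic but not virtually nilpotent); one needs the extension to be essentially unipotent, which is again the hard point. Note also that Theorem \ref{qp normal} is proved for the quasi-Albanese map itself being proper, not for a proper map to a quotient semi-abelian variety, so even the first reduction step would need a separate argument.

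The second claim has the same difficulty plus an independent one. Granting $H^1(X,\Q)=0$ (which for slope-rationally connected $X$ follows more directly from $h^0(\overline{X},\Omega^1_{\overline{X}}(\log D))=0$ than from the Kawamata argument you sketch), you conclude that all torsionfree nilpotent quotients of $\pi_1(X)$ are trivial. But to pass from ``virtually nilpotent with trivial torsionfree nilpotent quotients'' to ``finite'' you need the abelianisation of a finite-index \emph{nilpotent subgroup} to be finite, not that of $\pi_1(X)$ itself; this requires knowing that the relevant finite \'etale covers of $X$ still have $b_1=0$, which is not addressed. In short: the paper proves nothing here because nothing is known; your proposal correctly identifies where the difficulty lies but does not close it.
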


Recall (see \cite{Ca16}) that $X$ is `slope rationally connected' if quasi-projective, and if $h^0(\overline{X}, (\otimes^m\Omega^1_{\overline{X}}Log(D))\otimes H)=0$, for any ample line bundle $H$ on $\overline{X}$, provided $m\geq m(\overline{X},D,H)$. When $X$ is compact (i.e: $D=0$), this means that $X$ is rationally connected.

\section{Remarks and examples in the non-proper case}

\begin{example}\label{ex} Let $B$ be a connected complex manifold, $L$ a line bundle on $B$, $B'$ its zero section, and $X:=L\setminus B'$. If $B$ is quasi-projective, so is $X$. Notice that $X\to B$ also appears as the complement of the zero and infinity sections of $\mathbb P(L\oplus \mathcal{O}_B)\to B$.

\medskip

  The Gysin exact sequence for $L,X$ reads (\cite{MS}, Theorem 12.2, with $\mathbb Z$ coefficients), the fourth map being multiplication by $c_1(L)$:

$$0\to H^1(B)\to H^1(X)\to H^0(B)\to H^2(B)\to H^2(X)\to H^1(B)$$

Hence $H^1(X,\mathbb Z)=H^1(B,\mathbb Z)$ if $c_1(L)\neq 0$, moreover $\pi_1(X)$ is then a central extension of $\pi_1(B)$ by $\mathbb Z$, hence torsionfree nilpotent if so is $\pi_1(B)$. If $\pi_1(B)$ is torsionfree abelian, the nilpotency class of $\pi_1(X)$ is $2$. This happens if $B$ is, for example, an elliptic curve.

If the Log-canonical bundle of $B$ is moreover trivial, then so is the Log-canonical bundle of $X$, by a simple computation.

\medskip

Let us first take for $B$ an elliptic curve, and $L$ of nonzero degree $d$. Then $alb_X:X\to B:=Alb(X)$ is here not proper. The Logarithmic $1$-forms on $X$ then all come from the regular $1$-forms on $B$. Finally, $\pi_1(X)$ is an extension of $\pi_1(A)$ by $\pi_1(\mathbb C^*)=\mathbb Z$, hence torsionfree nilpotent (by \cite{Ca}, Appendix, Lemme A.1). It is a Heisenberg group, which is not K\"ahler (\cite{Ca}), but `quasi-K\"ahler'.

If we chose $B=\mathbb C^*\times \mathbb C^*$, with $L$ of any bidegree $(a,b)$ on its compactification $P:=\mathbb P_1\times \mathbb P_1$, $\pi_1(X)$ is abelian of rank $3$, since the natural restriction map $H^2(P,\Z)\to H^2(B)$ vanishes. In order to construct a quasi-projective $\C^*$-bundle over $\C^*\times \C^*$, one needs to change the algebraic structure, and chose as a good compactification the projective bundle $\pi:\P(E)\to C$, where $E$ is the non-trivial extension of two trivial line bundles over the elliptic curve $C$. Then $\P(E)\setminus B'$ is analytically, not algebraically, isomorphic to $B:=\C^*\times \C^*$. We obtain a $\C^*$-bundle with non-abelian fundamental group over $B$ by lifting one from $C$ as in the previous example.

After some versions of the current paper, the authors learned that the above examples were also considered in \cite[5.45]{HZ}.

\end{example}

\begin{remark} The preceding examples when $c_1(L)\neq 0$ show that two other properties of fibrations $f:X\to Y$ between compact K\"ahler manifolds do not extend to the case of non-proper surjective maps between quasi-K\"ahler manifolds, even if $Y$ is a projective curve and the map $f$ is a $\mathbb C^*$-bundle:

1. Even if the Log-canonical bundle is trivial (which is the case of $X=L\setminus B'$ in the previous example), $\pi_1(X)$ need not be virtually abelian. By contrast, recall that $\pi_1(X)$ is virtually abelian when $X$ is a compact K\"ahler manifold with $c_1(X)=0$. 

2. If $f:X\to Y$ is a `neat' fibration\footnote{`neatness' is achieved by suitably blowing up $X$ and $B$.} between compact K\"ahler manifolds, such that $\pi_1(X)$ is torsionfree nilpotent, of nilpotency class $\nu(\pi_1(X))$, then \cite{Ca-Ab}, Corollaire 5.2.1 shows that $$\nu(\pi_1(X))=max\{\nu(\pi_1(X_y)_X),\nu(\pi_1(B^{orb}))\}$$ if $\pi_1(X_y)_X$ is the image of $\pi_1(X_y)$ in $\pi_1(X)$, for $X_y$ any smooth fibre of $f$. Here $B^{orb}$ is the orbifold base of $f$. In particular: $\pi_1(X)$ is virtually abelian if so are $\pi_1(Y)$ and $\pi_1(X_y)_X$.

For non-K\"ahler nilmanifolds, $\nu(\pi_1(X))$ is the sum, not the $Max$ of the other two terms.
\end{remark} 

This shows that, at least for projective manifolds, one cannot construct torsionfree nilpotent K\"ahler groups of higher nilpotency class by constructing suitable fibrations. The only examples presently  known seem to be of nilpotency class $2$.

\begin{question}\label{q nilp} Let $G$ be a torsionfree nilpotent K\"ahler group. Can its nilpotency class $\nu(G)$ be at least 3? Can $\nu(G)$ be arbitrarily large?
Same questions for the quasi-K\"ahler groups.
\end{question}

 By contrast, the implication that $\pi_1(X)$ is virtually nilpotent if it is polycyclic remains true for $X$ quasi-K\"ahler (\cite{AN}, corollary 4.10).  It would be interesting to know if this remains true for solvable fundamental groups of quasi-K\"ahler manifolds, as Delzant theorem (\cite{De}) asserts that it is the case for K\"ahler groups. 

\begin{remark}\label{counter} The conclusions about the nilpotent cover and the universal cover being holomorphically convex commonly fail, as the following easy examples show. In the non-compact case, the conjecture of Shafarevich can thus extend only under restrictive conditions.

Let $M$ be an affine, or projective, smooth variety of dimension $n\geq 2$, and set $X:=M\setminus N$, where $N$ is a nonempty closed analytic subset of codimension at least $2$. Then $X$ is not holomorphically convex. If the universal cover $\widetilde{M}$ of $M$ is Stein, the universal cover $\widetilde{X}\subset \widetilde{M}$ of $X$ is not holomorphically convex. This happens by taking $M=\Bbb C^n, \Bbb P_n$, or an Abelian variety.

A more interesting example appears in \cite{W}, in which ones removes a suitable $N$ of codimension $2$ from the $4$-dimensional affine quadric $M$, the complement $X$ being diffeomorphic to $\Bbb C^4$.

\end{remark}

\section{Appendix: Weight-$1$ MHS}

The constructions made here are without doubt well-known, but we were unable to find a reference for them in the non-compact case.

We start with two holomorphic maps between irreducible quasi-projective varieties: $f:V\to \widetilde{X},g:\widetilde{X}\to Y$, where $\widetilde{X}$ is smooth, $Y$ normal, and $g$ is surjective and proper, with connected fibres. For $g:\widetilde{X}\to Y$, and the proof of Theorem \ref{qp normal} we may use $d:\widetilde{X}\to X$, the desingularisation of a normal quasi-projective $X$.

The maps $f,g$ induce $f^*:H^1(\widetilde{X})\to H^1(V)$ and $g^*:H^1(Y)\to H^1(\widetilde{X})$, $g^*$ is injective.

We shall define the Albanese map of $Y$ and describe some of its standard properties.

Let $H^1(Y):=Image(g^*)$. This is a sub-MHS of $H^1(\widetilde{X})$ defined over $\Z$. It thus defines\footnote{The construction for $M\subset H^1(\widetilde{X})$ is the same as for the quasi-Albanese variety: take the quotient $A_M:=(F^1(M))^*/M_{\Z}$, $F^{\bullet}$ being the Hodge filtration.} a semi-Abelian variety $A_Y$, quotient of $A:=Alb(\widetilde{X})$. Denote by $q_Y:A\to A_Y$ the quotient map, and by $\alpha:\widetilde{X}\to A:=Alb(\widetilde{X})$ the Albanese map.

\begin{proposition}\label{prop alb} If $V$ is either compact, or smooth, the following properties are equivalent:

1. $(g\circ f)^*:H^1(Y)\to H^1(V)$ vanishes.

2. Each Logarithmic one-form $u$ on $\widetilde{X}$ which belongs to $g^*(H^1(Y))$ vanishes on the smooth points of $V$.

3. $q_Y\circ f:V\to Alb_Y$ maps $V$ to a point. 
\end{proposition}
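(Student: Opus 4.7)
The plan is to prove the three conditions equivalent by the cyclic chain $(1)\Rightarrow(2)\Rightarrow(3)\Rightarrow(1)$, which bypasses the need to compare cohomology of $V$ with that of a smooth model. The key input, established earlier in the appendix during the construction of $A_Y$, is that pulling back translation-invariant $1$-forms along $q_Y\circ\alpha\colon\widetilde{X}\to A_Y$ identifies $H^0(\overline{A_Y},\Omega^1(\log))$ with $F^1 H^1(Y)\subset F^1 H^1(\widetilde{X})=H^0(\overline{\widetilde{X}},\Omega^1(\log D))$, together with the factorisation $\alpha_Y\circ g=q_Y\circ\alpha$ in which $\alpha_Y^*\colon H^1(A_Y)\to H^1(Y)$ is an isomorphism.

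For $(1)\Rightarrow(2)$, let $\omega$ be a logarithmic $1$-form on $\overline{\widetilde{X}}$ with $[\omega]\in g^*(H^1(Y))$. If $V$ is smooth, fix a good compactification $\overline{V}$ and, after suitable blow-ups, extend $f$ to $\overline{f}\colon\overline{V}\to\overline{\widetilde{X}}$; since $f(V)\subset\widetilde{X}$, the preimage $\overline{f}^{-1}(D)$ sits in $\overline{V}\setminus V$, so $\overline{f}^*\omega$ lies in $H^0(\overline{V},\Omega^1(\log\partial V))$ and represents $f^*[\omega]=0\in H^1(V,\mathbb{C})$; Deligne's injection $H^0(\overline{V},\Omega^1(\log\partial V))\hookrightarrow H^1(V,\mathbb{C})$ then forces $\overline{f}^*\omega=0$, whence $f^*\omega=0$ on $V=V^{\mathrm{sm}}$. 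If $V$ is compact, take a resolution $\pi\colon\widetilde{V}\to V$; then $(f\circ\pi)^*\omega$ is actually a \emph{holomorphic} $1$-form on the smooth compact K\"ahler manifold $\widetilde{V}$ (its image lies in $\widetilde{X}$ and avoids $D$), with vanishing cohomology class, hence is zero by classical Hodge theory, and restriction gives $f^*\omega=0$ on $V^{\mathrm{sm}}\cong\pi^{-1}(V^{\mathrm{sm}})$. For $(2)\Rightarrow(3)$, write every such $\omega$ as $(q_Y\circ\alpha)^*\eta$ for a translation-invariant $1$-form $\eta$ on $A_Y$; vanishing of $f^*\omega=((q_Y\circ\alpha)\circ f)^*\eta$ on $V^{\mathrm{sm}}$ for all $\eta$ means the differential of $q_Y\circ\alpha\circ f$ is identically zero on $V^{\mathrm{sm}}$, since the invariant $1$-forms trivialise the cotangent bundle of the semi-abelian $A_Y$. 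As $V$ is irreducible, $V^{\mathrm{sm}}$ is connected, so the map is constant on $V^{\mathrm{sm}}$, and by continuity on all of $V$.

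For $(3)\Rightarrow(1)$, the identity $\alpha_Y\circ g=q_Y\circ\alpha$ gives $(q_Y\circ\alpha\circ f)^*=(g\circ f)^*\circ\alpha_Y^*$ on $H^1$, so if $q_Y\circ\alpha\circ f$ is constant the left-hand side vanishes on $H^1(A_Y)$, and composing with $(\alpha_Y^*)^{-1}\colon H^1(Y)\xrightarrow{\sim}H^1(A_Y)$ yields $(g\circ f)^*|_{H^1(Y)}=0$. The main subtlety is in $(1)\Rightarrow(2)$ when $V$ is compact but possibly singular: one must conclude that a form vanishes on $V^{\mathrm{sm}}$ from a cohomological vanishing on $V$, and the cleanest route is to pass to a resolution and exploit that the pulled-back logarithmic form becomes holomorphic on $\widetilde{V}$, so as to invoke the classical Hodge-theoretic injection $H^0(\widetilde{V},\Omega^1)\hookrightarrow H^1(\widetilde{V},\mathbb{C})$. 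Ending the cycle at $(1)$ rather than attempting a direct $(2)\Rightarrow(1)$ sidesteps any use of injectivity of $H^1(V)\to H^1(\widetilde{V})$, which need not hold for compact non-normal $V$.
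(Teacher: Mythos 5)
Your proof is correct and takes essentially the same route as the paper: the cycle $(1)\Rightarrow(2)\Rightarrow(3)\Rightarrow(1)$, with $(1)\Rightarrow(2)$ resting on the injectivity of $H^0(\Omega^1(\log))\hookrightarrow H^1$ on a smooth model (passing to a resolution when $V$ is compact and singular), $(2)\Rightarrow(3)$ on the trivialisation of the cotangent bundle of the semi-abelian $A_Y$ by invariant forms, and $(3)\Rightarrow(1)$ on $H^1(A_Y)=H^1(Y)$; your compact case is even slightly more self-contained, since you use only functoriality of pullback to the resolution where the paper cites \cite{D}, 8.2.7. One small point of order: in $(3)\Rightarrow(1)$ you invoke the factorisation $\alpha_Y\circ g=q_Y\circ\alpha$, but the map $\alpha_Y$ is only constructed in Corollary \ref{cor alb}, which relies on this Proposition; this is harmless, because all that step actually requires is the identity $(q_Y\circ\alpha)^*(H^1(A_Y))=g^*(H^1(Y))$ inside $H^1(\widetilde{X})$, which holds by the very construction of $A_Y$ and lets you conclude without mentioning $\alpha_Y$.
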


\begin{proof} That $2$ implies $3$ is clear, since $A_Y$ is obtained by integrating these $1$-forms along paths in $\widetilde{X}$. That $3$ implies $1$ follows from the equality $H^1(A_Y)=H^1(Y)$, true by the construction of $A_Y$. The last implication $1\Longrightarrow 2$ is clear if $V$ is smooth, since the logarithmic $1$-forms on $\widetilde{X}$ pulled back by $f$ on $V$ are elements of $H^1(V)$, and vanish cohomologically only if they vanish identically. If $V$ is projective (but not necessarily smooth), the assertion again follows from \cite{D}, 8.2.7, which says that $Ker(f^*)=Ker (g\circ v)^*:H^1(\widetilde{X})\to H^1(\widetilde{V})$, if $v:\widetilde{V}\to V$ is a desingularisation. One can thus assume $V$ to be smooth and apply the preceding argument.
\end{proof}

\begin{corollary}\label{cor alb} If $g:\widetilde{X}\to Y$ is proper, surjective, with $Y$ normal and $\widetilde{X}$ smooth, there is a unique map $\alpha_Y:Y\to Alb_Y$ such that  $\alpha_Y\circ g=q_Y\circ g:\widetilde{X}\to Alb_Y$. This map is called the Albanese map of $Y$. It does not depend on the choice of $g,\widetilde{X}$, and enjoys the usual universal property of Albanese maps.
\end{corollary}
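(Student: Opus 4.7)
The strategy is to show that $q_Y\circ\alpha:\widetilde{X}\to A_Y$ is constant on the (connected, compact) fibres of $g$, descend it holomorphically through $g$ using normality of $Y$, and then verify functoriality and the universal property.

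First, fix a closed point $y\in Y$ and set $V:=g^{-1}(y)$, which is compact because $g$ is proper. Apply Proposition \ref{prop alb} with this $V$: the composition $g\circ i_V:V\hookrightarrow \widetilde X\to Y$ is the constant map to $\{y\}$, so $(g\circ i_V)^*=0$ on $H^1$. The implication $(1)\Rightarrow(3)$ of Proposition \ref{prop alb}, which is available because $V$ is compact, then says that $q_Y\circ\alpha$ collapses $V$ to a single point of $A_Y$. Hence $q_Y\circ\alpha$ is constant on every fibre of $g$, and since the fibres are connected the resulting set-theoretic map $\alpha_Y:Y\to A_Y$ is well-defined.

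Next, I promote this to a holomorphic map. Because $g$ is proper and surjective with connected fibres and $Y$ is normal, the analytic form of Zariski's connectedness theorem gives $g_*\mathcal O_{\widetilde X}=\mathcal O_Y$. Composing $q_Y\circ\alpha$ with any local holomorphic coordinate on $A_Y$ yields a holomorphic function on $\widetilde X$ which is fibre-constant, hence by $g_*\mathcal O_{\widetilde X}=\mathcal O_Y$ the pullback of a unique holomorphic function on (a neighbourhood in) $Y$. Gluing these, $\alpha_Y$ is holomorphic, and uniqueness of $\alpha_Y$ is immediate from surjectivity of $g$.

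Finally, for independence of the choice of $(g,\widetilde X)$, given two such data $(g_i,\widetilde X_i)$, $i=1,2$, dominate them by a third smooth model $\widetilde X_3$ mapping properly and surjectively to both (resolve the closure of the graph of the bimeromorphic correspondence induced by $Y$). Since $H^1(Y)$ is defined intrinsically as $\mathrm{Image}(g_i^*)$ inside $H^1(\widetilde X_i)$ and the pullbacks are compatible, the three semi-abelian varieties $A_{Y,i}$ are canonically isomorphic, giving a canonical $\alpha_Y$. For the universal property, any morphism $h:Y\to B$ to a semi-abelian variety produces $h\circ g:\widetilde X\to B$, which factors through $\alpha:\widetilde X\to A$ by the universal property of $Alb(\widetilde X)$; the induced homomorphism $A\to B$ kills $\ker(q_Y)$ because $(h\circ g)^*H^1(B)\subset g^*H^1(Y)$, so it descends to $A_Y\to B$ and factors $h$ through $\alpha_Y$ (up to translation). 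The main obstacle is the analytic descent in the second paragraph: everything else is formal once one knows, via Proposition \ref{prop alb} and $g_*\mathcal O_{\widetilde X}=\mathcal O_Y$, that the composite $q_Y\circ\alpha$ is genuinely a pullback from $Y$.
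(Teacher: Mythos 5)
Your proof is correct and follows essentially the same route as the paper: apply Proposition \ref{prop alb} (implication $1\Rightarrow 3$) to the compact fibres of $g$, on which $(g\circ f)^*$ vanishes, and then descend the fibre-constant map $q_Y\circ\alpha$ through $g$ using normality of $Y$. Two small remarks: the Proposition is stated for \emph{irreducible} $V$, so strictly one should apply it to each irreducible component of a fibre and then use connectedness of the fibre (which is what the paper does); and the paper explicitly declines to prove the independence-of-model and universal-property assertions, which you sketch correctly but which go beyond what the paper itself establishes.
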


\begin{proof} The irreducible components of the fibres of $g$ are compact, by assumption, and satisfy the first property of Proposition \ref{prop alb}. They are thus mapped to points by $q_Y\circ \alpha$, as this Proposition shows, which implies the claim. We do not use the two other statements, and do not prove them (the proof is standard). 
\end{proof}

\begin{corollary}\label{alb^*} For each component $G$ of any fibre of $g$, denote by $f_G:G\to \widetilde{X}$ the inclusion map, and $K_G:=Ker(f_G^*:H^1(\widetilde{X})\to H^1(G))$. Then $g^*(H^1(Y))=\cap_{G}K_G\subset H^1(\widetilde{X})$.
\end{corollary}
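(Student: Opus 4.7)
The inclusion $g^*(H^1(Y)) \subseteq \bigcap_G K_G$ is immediate: for any component $G$ of any fibre of $g$, the composite $g \circ f_G$ is the constant map to the corresponding point of $Y$, so $f_G^* \circ g^* = 0$ on $H^1(Y)$. The substantive content is the reverse inclusion, and my plan is to realise $M := \bigcap_G K_G$ as the pullback along $g$ of the $H^1$ of a semi-abelian quotient of $\mathrm{Alb}(\widetilde{X})$.

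Each $K_G$ is the kernel of a morphism of mixed Hodge structures, so $M$ is itself a sub-MHS of $H^1(\widetilde{X})$. Via the construction reviewed in the Appendix applied to the identification $\alpha^*\colon H^1(A) \cong H^1(\widetilde{X})$, this sub-MHS determines a quotient semi-abelian variety $q_M \colon A \twoheadrightarrow A_M$ of $A := \mathrm{Alb}(\widetilde{X})$, characterised by $q_M^* H^1(A_M) = M$. The heart of the argument is then to show that $q_M \circ \alpha \colon \widetilde{X} \to A_M$ collapses every fibre of $g$ to a point. By the very definition of $M$, the pullback $(q_M \circ \alpha \circ f_G)^* \colon H^1(A_M) \to H^1(G)$ vanishes for every component $G$ of every fibre; since $g$ is proper, $G$ is compact, and I would deduce constancy of $q_M \circ \alpha \circ f_G$ exactly as in the implication $(1) \Rightarrow (3)$ of Proposition~\ref{prop alb}, applied to the sub-MHS $M$ in place of $g^* H^1(Y)$: pass to a desingularisation $\widetilde{G} \to G$ (using \cite{D}, 8.2.7 to control $H^1$), and invoke the universal property of the Albanese for the smooth compact variety $\widetilde{G}$ mapping to the semi-abelian $A_M$. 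The fibres of $g$ being connected, the fact that each irreducible component is collapsed forces the entire fibre to be collapsed.

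To conclude, since $g$ is proper surjective with connected fibres and $Y$ is normal, any holomorphic map on $\widetilde{X}$ that is constant on fibres of $g$ factors holomorphically through $g$ (standard Stein-factorisation argument; equivalently, $\mathcal{O}_Y \xrightarrow{\sim} g_*\mathcal{O}_{\widetilde{X}}$). This yields $\phi \colon Y \to A_M$ with $\phi \circ g = q_M \circ \alpha$, and applying $H^1$ gives
\[
M \;=\; (q_M \circ \alpha)^* H^1(A_M) \;=\; g^*\bigl(\phi^* H^1(A_M)\bigr) \;\subseteq\; g^* H^1(Y),
\]
which is the required inclusion. The main delicacy is the constancy step for a possibly singular component $G$: the cohomological condition $(q_M \circ \alpha \circ f_G)^* = 0$ has to be upgraded to a pointwise statement about the map itself, and this is precisely where compactness of $G$, together with the semi-abelian Albanese universal property (accessed via desingularisation and Deligne's surjectivity on weights, as in Proposition~\ref{prop alb}), does the work.
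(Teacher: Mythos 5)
Your proposal is correct and follows essentially the same route as the paper: both form the quotient semi-abelian variety $A_M$ (the paper's $A_Y^*$) attached to the sub-MHS $\bigcap_G K_G$, use the argument of Proposition \ref{prop alb} to collapse the fibre components, factor the resulting map through $Y$ via $g_*\mathcal{O}_{\widetilde{X}}=\mathcal{O}_Y$, and compare pullbacks on $H^1$. Your write-up is in fact somewhat more explicit than the paper's about the two delicate points (constancy on possibly singular components via desingularisation and Deligne 8.2.7, and the factorisation through $Y$), which the paper compresses into ``by construction'' and ``we thus get a unique $\alpha_Y^*$''.
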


\begin{proof} The inclusion $g^*(H^1(Y))\subset\cap_{G}K_G$ is clear. In the opposite direction: $H:=\cap_{G}K_G$ is a sub MHS of $H^1(\widetilde{X})$ defined over $\Z$, and we thus have an associated quotient semi-Abelian variety $A_Y^*$ and a quotient map $q_g:A\to A_Y^*$. By construction, all $G's$ are mapped to points of $A_Y^*$ by $q_g\circ \alpha\circ f_G:G\to A_Y^*$. We thus get a unique $\alpha_Y^*:Y\to A_Y^*$ such that $\alpha^*_Y\circ g=q_g\circ \alpha:\widetilde{X}\to A_Y^*$. Since $q_g^*(H^1(A_Y^*))=H$ contains $g^*(H^1(Y))=\alpha^*(q_Y^*(H^1(A_Y)))$, we have a natural quotient $q':A_Y^*\to A_Y$ such that $q'\circ \alpha_Y^*=\alpha_Y:Y\to A_Y$. The universal property of $\alpha_Y:Y\to A_Y^*$ implies that $q'$ is isomorphic. Since we did not show this universal property, let us prove directly that $H=g^*(H^1(Y))$: we indeed have $(q_g\circ \alpha)^*(H^1(A_Y^*))=\alpha^*(q_g^*(H^1(A_Y^*)))=H$.

On the other hand, since $q_g\circ \alpha=q_Y^*\circ g$, $H$ is also equal to:

 $g^*((q_Y^*)^*(H^1(A_Y^*)))\subset g^*(H^1(Y))$. We thus get $H=g^*(H^1(H))$, and $A_Y=A_Y^*$.
\end{proof}

\begin{remark} The Albanese map $q_Y:Y\to A_Y$ defined here coincides for $s=1$ with the higher Albanese maps as defined by R. Hain. The Albanese map $q_Y^*:Y\to A_Y^*$ coincides with the one defined in \cite{CDY}, remark 1.2. We thus showed that all three coincide, and in particular that $H^1(A_Y^*)=H^1(Y)$, which depends on MHS considerations (at least using the proof given here).
\end{remark}

\bibliography{nilp}
\bibliographystyle{amsalpha}
\end{document}